\begin{document}

\def\HH{{\mathcal{H}}}
\def\orb{{\operatorname{orb}}}
\def\diam{{\operatorname{diam}}}
\def\II{{\mathfrak{I}}}
\def\PO{{\operatorname{PO}}}
\def\Cl{{\operatorname{Cl}}}
\def\Max{{\operatorname{-Max}}}
\def\XX{{\bf{X}}}
\def\YY{{\bf{Y}}}
\def\BBB{{\mathcal B}}
\def\inv{{\operatorname{inv}}}
\def\emph{\it}
\def\Int{{\operatorname{Int}}}
\def\Spec{\operatorname{Spec}}
\def\Bin{{\operatorname{B}}}
\def\n{\operatorname{b}}
\def\N{{\operatorname{GB}}}
\def\BC{{\operatorname{BC}}}
\def\dlog{\frac{d \log}{dT}}
\def\Sym{\operatorname{Sym}}
\def\Nr{\operatorname{Nr}}
\def\lbrack{{\{}}
\def\rbrack{{\}}}
\def\burnside{\operatorname{B}}
\def\Sym{\operatorname{Sym}}
\def\Hom{\operatorname{Hom}}
\def\Inj{\operatorname{Inj}}
\def\Aut{{\operatorname{Aut}}}
\def\Mor{{\operatorname{Mor}}}
\def\Map{{\operatorname{Map}}}
\def\CMap{{\operatorname{CMap}}}
\def\GMaps{G{\operatorname{-Maps}}}
\def\Fix{{\operatorname{Fix}}}
\def\res{{\operatorname{res}}}
\def\ind{{\operatorname{ind}}}
\def\inc{{\operatorname{inc}}}
\def\coind{{\operatorname{cnd}}}
\def\Equiv{{\mathcal{E}}}
\def\W{\operatorname{W}}
\def\F{\operatorname{F}}
\def\witt{\operatorname{gh}}
\def\ngh{\operatorname{ngh}}
\def\Fm{{\operatorname{Fm}}}
\def\bij{{\iota}}
\def\mk{{\operatorname{mk}}}
\def\km{{\operatorname{mk}}}
\def\VV{{\bf{V}}}
\def\ff{{\bf{f}}}
\def\ZZ{{\mathbb Z}}
\def\Zhat{{\widehat{\mathbb Z}}}
\def\CC{{\mathbb C}}
\def\PP{{\mathbb P}}
\def\EE{{\mathbb E}}
\def\MM{{\mathbb M}}
\def\JJ{{\mathbb J}}
\def\NN{{\mathbb N}}
\def\RR{{\mathbb R}}
\def\QQ{{\mathbb Q}}
\def\FF{{\mathbb F}}
\def\mm{{\mathfrak m}}
\def\nn{{\mathfrak n}}
\def\jj{{\mathfrak j}}
\def\aaa{{{{\mathfrak a}}}}
\def\bbb{{{{\mathfrak b}}}}
\def\ppp{{{{\mathfrak p}}}}
\def\qqq{{{{\mathfrak q}}}}
\def\PPP{{{{\mathfrak P}}}}
\def\BB{{\mathfrak B}}
\def\jj{{\mathfrak J}}
\def\LL{{\mathfrak L}}
\def\qq{{\mathfrak Q}}
\def\rr{{\mathfrak R}}
\def\DD{{\mathfrak D}}
\def\cc{{\mathfrak S}}
\def\TT{{\mathcal T}}
\def\SS{{\mathcal S}}
\def\UU{{\mathcal U}}
\def\AA{{\mathcal A}}
\def\BB{{\mathcal B}}
\def\Primes{{\mathcal P}}
\def\genS{{\langle S \rangle}}
\def\genT{{\langle T \rangle}}
\def\bT{\mathsf{T}}
\def\bD{\mathsf{D}}
\def\bC{\mathsf{C}}
\def\VV{{\bf V}}
\def\ff{{\bf f}}
\def\uu{{\bf u}}
\def\aa{{\bf{a}}}
\def\bb{{\bf{b}}}
\def\zero{{\bf 0}}
\def\rad{\operatorname{rad}}
\def\End{\operatorname{End}}
\def\id{\operatorname{id}}
\def\mod{\operatorname{mod}}
\def\im{\operatorname{im}\,}
\def\ker{\operatorname{ker}}
\def\coker{\operatorname{coker}}
\def\ord{\operatorname{ord}}

\newtheorem{theorem}{Theorem}[section]
\newtheorem{proposition}[theorem]{Proposition}
\newtheorem{corollary}[theorem]{Corollary}
\newtheorem{conjecture}[theorem]{Conjecture}
\newtheorem{remark}[theorem]{Remark}
\newtheorem{lemma}[theorem]{Lemma}
\newtheorem{example}[theorem]{Example}
\newtheorem{problem}[theorem]{Problem}

 \newenvironment{map}[1]
   {$$#1:\begin{array}{rcl}}
   {\end{array}$$
   \\[-0.5\baselineskip]
 }

 \newenvironment{map*}
   {\[\begin{array}{rcl}}
   {\end{array}\]
   \\[-0.5\baselineskip]
 }

 \newenvironment{nmap*}
   {\begin{eqnarray}\begin{array}{rcl}}
   {\end{array}\end{eqnarray}
   \\[-0.5\baselineskip]
 }

 \newenvironment{nmap}[1]
   {\begin{eqnarray}#1:\begin{array}{rcl}}
   {\end{array}\end{eqnarray}
   \\[-0.5\baselineskip]
 }

\newcommand{\eq}{eq.\@\xspace}
\newcommand{\eqs}{eqs.\@\xspace}
\newcommand{\diagram}{diag.\@\xspace}

%\begin{frontmatter}

\title[Group actions and musical scales]{Group actions, power mean orbit size, and musical scales} 

\author[J.\ Elliott]{Jesse Elliott}
\address{Department of Mathematics\\ California
State University, Channel Islands\\ Camarillo, California 93012}
\email{jesse.elliott@csuci.edu}

\maketitle

\begin{abstract}

We provide an application of the theory of group actions to the study of musical scales.   For any group $G$, finite $G$-set $S$, and real number $t$, we define the {\it $t$-power diameter} $\diam_t(G,S)$ to be the size of any maximal orbit of $S$ divided by the $t$-power mean orbit size of the elements of $S$.  The symmetric group $S_{11}$ acts on the set of all tonic scales, where a {\it tonic scale} is a subset of $\ZZ_{12}$ containing $0$.  We show that, for all $t \in [-1,1]$, among all the subgroups $G$ of $S_{11}$, the $t$-power diameter of the $G$-set of all heptatonic scales is largest for the subgroup $\Gamma$, and its conjugate subgroups, generated by $\{(1 \ 2),(3 \ 4),(5 \ 6),(8 \ 9),(10 \ 11)\}$.    The unique maximal $\Gamma$-orbit consists of the 32  th\=ats of Hindustani classical music popularized by Bhatkhande.   This analysis provides a reason why these 32 scales, among all 462 heptatonic scales, are of mathematical interest.  We also apply our analysis, to a lesser degree, to hexatonic and pentatonic scales.

\ \\

%It also motivates the mathematical problem of determining, for a given finite $G$-set $S$, the subgroups of $G$ that maximize the $t$-power diameter of $S$. 

\noindent {\bf Keywords:}  scales, group action, power mean, heptatonic scales, hexatonic scales, pentatonic scales. \\

\noindent {\bf MSC:}   05E18, 26E60.
\end{abstract}

\section{Introduction and summary}

This paper provides an application of group actions to the study of musical scales and uses it to motivate a new invariant, which we call the {\it $t$-power diameter} $\diam_t(G,S)$, defined, for any group $G$, finite $G$-set $S$, and extended real number $t \in \RR\cup\{\infty,-\infty\}$, to be the size of any maximal orbit of $S$ divided by the $t$-power mean orbit size of the elements of $S$.

We may represent the {\bf chromatic scale} as the set $$\ZZ_{12} = \{0,1,2,3,4,5,6,7,8,9,10,11\},$$ 
where $0$ represents the {\bf tonic}, or {\bf key}, of the chromatic scale, which can be any fixed pitch class.  Thus, for example, if  one decides to let $0$ represent the pitch class C, then $1$ represents C$\sharp$, $2$ represents D, and so on.  A {\bf scale (in $\ZZ_{12}$)} is a subset of $\ZZ_{12}$, while a {\bf tonic scale (in $\ZZ_{12}$)} is a scale in $\ZZ_{12}$ containing $0$.   A tonic scale is {\bf $k$-tonic} if it consists of $k$ notes, where $k \in \{1,2,\ldots,12\}$; thus, any tonic scale $s$ is $|s|$-tonic, where $|s|$ is the cardinality of $s$.   The $k$-tonic scales, respectively, for $k = 1,2,\ldots,12$ are called {\bf monotonic}, {\bf ditonic},  {\bf tritonic}, {\bf tetratonic},  {\bf pentatonic},  {\bf hexatonic}, {\bf heptatonic},  {\bf octatonic},  {\bf nonatonic}, {\bf decatonic},  {\bf hendecatonic}, and {\bf chromatic}.  There are a total of $2^{11} = 2048$ possible tonic scales, with a total of ${11 \choose k-1}$ $k$-tonic scales for each $k = 1, 2, 3, \ldots, 12$.  These numbers comprise the eleventh row of Pascal's triangle:
$$1 \ \ \ 11 \ \ \  55  \ \ \ 165 \ \ \ 330  \ \ \ 462 \ \ \ 462 \ \ \ 330 \ \ \ 165 \ \ \ 55 \ \ \ 11 \ \ \ 1.$$
Thus, for example, there are 462 heptatonic scales and 330 pentatonic scales.    We call a scale that may not contain the tonic an {\bf atonic scale (in $\ZZ_{12}$)}.    There are a total of $2^{12} = 4096$ possible atonic scales (including the unique empty scale), with a total of ${12 \choose k}$ {\bf $k$-atonic} scales for each $k = 0, 1, 2, 3, \ldots, 12$.  These numbers comprise the twelfth row of Pascal's triangle:
$$1 \ \ \ 12 \ \ \  66  \ \ \ 220 \ \ \ 495  \ \ \ 792 \ \ \ 924 \ \ \ 792 \ \ \ 495 \ \ \ 220 \ \ \ 66 \ \ \ 12 \ \ \ 1.$$

Throughout this paper,  $\TT$ denotes the set of all tonic scales in $\ZZ_{12}$ and $\TT_k$ the set of all $k$-tonic scales in $\ZZ_{12}$.   The symmetric group $S_{11}$ acts naturally on the sets $\TT$ and $\TT_k$: a permutation $\sigma$ in $S_{11}$ acts on a tonic scale $s \in \TT$ by $\sigma \cdot s = \sigma(s) = \{\sigma(x): x \in s\}$, where one sets $\sigma(0) = 0$.    Explicitly, an element $\sigma$ of $S_{11}$ maps a scale $\{0,s_1, s_2, \ldots, s_{k-1}\}$ to the scale $\{0,\sigma(s_1),\sigma(s_2), \ldots, \sigma(s_{k-1})\}$.  This defines an action of $S_{11}$ on $\TT$, and since $|\sigma(s)| = |s|$ for all $s$, the action induces an action on $\TT_k$ for all  $k \in \{1,2,\ldots,12\}$.  More generally, the group $S_{12}$ acts on the set $\SS$ of all (atonic) scales in $\ZZ_{12}$.   Moreover, if we consider $S_{11} = \{\sigma \in S_{12}: \sigma(0) = 0\}$ as a subgroup of $S_{12}$, then the action of $S_{12}$ on $\SS$ descends to the action of $S_{11}$ on $\TT$.  Although in this paper we focus mainly on the action of $S_{11}$ on $\TT$, many of our results ascend appropriately to the action of $S_{12}$ on $\SS$.

The group $S_{11}$ has $11! = 39,916,800$ elements (and the group $S_{12}$ has $12! = 479,001,600$ elements).   A ``musical'' scale acted on by a randomly chosen element of $S_{11}$ is very unlikely to be very musical.  For example, under the permutation $(1\ 4)(3\ 5)(8\ 7 \ 9 \ 10)$, the heptatonic {\bf major scale} $\{0,2,4,5,7,9,11\}$ maps to the somewhat ``unmusical'' scale $\{0,1,2,3,9, 10,11\}$. %(which consists of  a chromatic sequence $0,1,2,3$, followed by a dramatic jump from 3 to 9, followed by another chromatic sequence $9,10,11$). 
However, by contrast, some permutations preserve musicality fairly well, e.g., the permutation $(3 \ 4)(8 \ 9)$, which swaps the major scale and the {\bf harmonic minor scale} $\{0,2,3,5,7,8,11\}$.   One of the main questions we investigate in this paper is the following:  are there medium-sized subgroups of $S_{11}$ whose actions on ${\TT}_7$ preserve ``musicality''?   An ideal subgroup of $S_{11}$ would be one that ``respects musicality'' in the sense that scales of approximately the same ``musicality'' appear in the same orbit.   As we will see, some subgroups of $S_{11}$ induce actions on the heptatonic scales that preserve musicality better than others do.  One of our main claims is that the group $\Gamma$ of $S_{11}$ generated by $\{(1 \ 2),(3 \ 4),(5 \ 6),(8 \ 9),(10 \ 11)\}$ is the ``best'' such subgroup, and the 32 scales in its unique maximal orbit, which  coincide with the 32 th\=ats of Hindustani (North Indian) classical music, represent under a particular measure the ``most musical'' scales among the 462 possible heptatonic scales.

To measure the musical efficacy of a subgroup $G$ of $S_{11}$, we define the {\bf $G$-musicality} of a scale $s \in \TT_k$ to be the size $|Gs|$ of the $G$-orbit $Gs$ of $s$ in $\TT_k$ divided by the average size of a $G$-orbit of $\TT_k$.  A consequence of this definition is that the $G$-musicality is a small as possible, namely $1$, for all scales if and only if every orbit has the same number of elements, which holds if $G = S_{11}$ or if $G$ is the trivial group.  In general, the $G$-musicality of a given scale will attain a maximum for some subgroups of $S_{11}$ in between those two extremes.   

The intution behind the concept of $G$-musicality is that, if a scale has a small $G$-orbit relative to the average $G$-orbit size, then it has too much symmetry relative to $G$ and thus the notes comprising the scale are more ``$G$-equivalent'' to each other and therefore have fewer notes that have their own individual character, whereas scales with larger orbits relative to the average orbit size are comprised of notes that can be better differentiated, or distiguished from one another, by the group $G$.  The thesis of this paper is that the subgroups $G$ of $S_{11}$ that yield the largest possible $G$-musicality (relative to the other subgroups of $S_{11}$) of any $k$-tonic scale naturally lead to  mathematically and musically interesting theories of $k$-tonic scales, namely, those $k$-tonic scales with the largest $G$-musicality for any subgroup $G$ of $S_{11}$.

There is a slight subtlety here, however, since, given a group $G$ and a finite $G$-set $S$, the average size of a $G$-orbit of $S$ can be measured in at least two distinct ways.  One might naively define the average size of a $G$-orbit of $S$ to be $$\frac{|S|}{|S/G|} = \frac{\sum_{i = 1}^r |O_i|}{r},$$ where $S/G = \{O_1, O_2, \ldots, O_r\}$  is the set of all $G$-orbits of $S$ and where $r = |S/G|$ is the number of orbits.  This represents the average number of elements in each orbit, in a naive sense.  However, one may also define the {\bf average orbit size of the elements of $S$} to be
$$\orb_1(G,S) = \frac{\sum_{s \in S} |Gs|}{|S|} = \frac{\sum_{i = 1}^r |O_i|^2}{\sum_{i = 1}^r |O_i|}.$$  This represents the expected value of $|Gs|$ for $s \in S$, where each element of $S$ is equally likely to be chosen.   By contrast, the number $\frac{|S|}{|S/G|}$ previously considered represents the expected value of $|O_i|$, where each {\it orbit} $O_i$ is equally likely to be chosen.   Since our focus is on the elements of $S$ rather than on the orbits, $\orb_1(G,S)$ is a better notion of average orbit size than is the naive definition $\frac{|S|}{|S/G|}$.

Nevertheless, both of these measures of ``average orbit size'' have mathematical merit, and this is supported by the observation that, using {\it power means}, one may continuously deform one of these two means to the other, as follows.  For any $t \in \RR-\{0\}$, we define the {\bf $t$-power mean orbit size of the elements of $S$} to be
$$\orb_t(G,S) =  \left(\frac{\sum_{s \in S} |Gs|^t}{|S|}\right)^{1/t}.$$    This represents the $t$-power mean of $|Gs|$ over all $s \in S$.  Clearly, $\orb_t(G,S)$ for $t = 1$ is the average orbit size of the elements of $S$.  Moreover, for $t = -1$, we have
$$\orb_{-1}(G,S) =  \left(\frac{\sum_{s \in S} |Gs|^{-1}}{|S|}\right)^{-1} = \frac{|S|}{\sum_{O \in S/G} |O||O|^{-1}} = \frac{|S|}{|S/G|},$$
and therefore $\orb_{-1}(G,S)$ is the average number of elements of $S$ in each orbit.  
Taking appropriate limits at $t = 0,\pm \infty$, one can define $\orb_t(G,S)$ for all $t \in [-\infty,\infty]$, and then
$$\orb_\infty(G,S) = \max\{|Gs|: s \in S\}$$
is the maximal orbit size of $S$, and
$$\orb_{-\infty}(G,S) = \min\{|Gs|: s \in S\}$$
is the minimal orbit size.   Clearly, then, every $G$-orbit of $S$ has the same size if and only if the function
$\orb_t(G,S)$ is constant with respect to $t$.  One can use calculus to show that, if the function $\orb_t(G,S)$ is not constant, then it is bounded and has positive derivative everywhere.  Moreover, from $|S|$ and the function $\orb_t(G,S)$ for $t \in [0,1]$, one can recover all of the orbit sizes.   Our general philosophy is that the critical region of interest of the function $\orb_t(G,S)$ is the interval $t \in [-1,1]$, with the value at $t = 1$ being the most important.

For any finite $G$-set $S$, we define the {\bf $t$-power diameter $\diam_t(G,S)$ of $S$}  to be 
$$\diam_t(G,S) =  \frac{\orb_{\infty}(G,S)}{\orb_{t}(G,S)} =  \frac{\max\{|Gs| : s \in S\} }{\orb_{t}(G,S)}.$$  In other words, $\diam_t(G,S)$ is the ratio of the maximal $G$-orbit size of $S$ to the $t$-power mean  orbit size of the elements of $S$. The function $\diam_t(G,S)$, if not identically $1$, has negative derivative with respect to $t$ and has limiting values of $1$ and  $\frac{\max\{|Gs|: s \in S\}}{\min\{|Gs|: s \in S\}}$ at $t = \infty$ and $t = -\infty$, respectively.  The $t$-power diameter $\diam_t(G,S)$ represents in an intuitive sense the amount of spread in the  ``kinetic energy'' or ``entropy'' of the elements of $S$ under the action of $G$, where elements with larger orbits, or equivalently with smaller stabilizers, are considered to have more kinetic energy.  The mathematical problem we pose here is the following.

\begin{problem}
Given a group $G$, a finite $G$-set $S$, and $t \in [-\infty,\infty]$, determine the subgroups $H$ of $G$ for which $\diam_t(H,S)$ is largest. 
\end{problem}

 Such subgroups $H$ of $G$ maximize the spread of the ``kinetic energy'' of the elements of $S$ under the induced goup  action.

The following is our main result regarding heptatonic scales in $\ZZ_{12}$.

\begin{theorem}[with James Allen,  Paul Estrada, and Michael McCann]\label{mainconjecture}
For all $t \in [-1,1]$, the subgroups $G$ of $S_{11}$ for which $\diam_t(G,\TT_7)$ is largest are the group $\Gamma$ generated by $\{(1 \ 2),(3 \ 4),(5 \ 6),(8 \ 9),(10 \ 11)\}$, along with its conjugate subgroups.  
\end{theorem}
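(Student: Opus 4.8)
The first step is to record two reductions. Since conjugation by any $\pi\in S_{11}$ is an automorphism of the $S_{11}$-set $\TT_7$, it carries $G$-orbits to $(\pi G\pi^{-1})$-orbits of the same size; hence $\diam_t(G,\TT_7)$ depends only on the conjugacy class of $G$, which is exactly why the conclusion asserts optimality of $\Gamma$ ``along with its conjugate subgroups.'' Moreover, $\diam_t(G,\TT_7)$ depends on $G$ only through the multiset of its orbit sizes on $\TT_7$, so the problem becomes one of comparing orbit-size multisets (partitions of $462$) realizable by subgroups of $S_{11}$. I would next observe that $\Gamma$ is, up to conjugacy, the group generated by a maximal system of five disjoint transpositions in $S_{11}$, fixing the two points $0$ and $7$. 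Identifying $s\in\TT_7$ with the $6$-subset $s\setminus\{0\}$ of $\{1,\dots,11\}$ and recording, for each transposed pair $P_i$, whether $s$ meets $P_i$ in zero, one, or two points, one sees that $|\Gamma s|=2^{a(s)}$, where $a(s)$ is the number of pairs met in exactly one point. A short count then yields the full orbit structure of $\Gamma$: one orbit of size $32$ (the $32$ th\=ats, comprising $0,7$ and one note from each pair), together with $5,20,30,30,10$ orbits of sizes $16,8,4,2,1$, respectively, for a total of $96$ orbits and $462$ scales. In particular $\orb_\infty(\Gamma,\TT_7)=32$, and this pins down $\diam_t(\Gamma,\TT_7)$ for every $t$.

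For a general subgroup $G$, the plan is to exploit the decomposition of $\{1,\dots,11\}$ into $G$-orbits (blocks). When $G$ is the direct product of its transitive constituents on these blocks, the action on subsets factors, so the orbit-size generating function of $G$ on $\TT_7$ is the product of the per-block generating functions (a P\'olya/cycle-index factorization), reducing the computation of $\diam_t(G,\TT_7)$ to the transitive constituents and their convolution. Within this framework I would argue that a two-point block carrying a transposition is the most efficient constituent for producing spread: it doubles orbit sizes precisely on the scales meeting the block in one point while fixing all others, and stacking as many such blocks as possible concentrates a single large free orbit of size $2^m$ against a graded family of smaller orbits. Filling all of $\{1,\dots,11\}$ with disjoint transpositions forces $m=5$ (one point must remain fixed), which is exactly $\Gamma$. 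The comparison still to be made is against groups having a larger transitive constituent (a $k$-cycle, $S_4$, $A_5$, $\ZZ_{11}$, and so on) or being a proper subdirect product of such constituents.

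To promote comparisons at special values of $t$ to the whole interval, I would use the structural facts already recorded in the text: $t\mapsto\orb_t(G,\TT_7)$ is continuous and nondecreasing, and $\diam_t(G,\TT_7)$ is strictly decreasing when nonconstant. Writing the desired inequality as $\log\orb_t(\Gamma,\TT_7)-\log\orb_t(G,\TT_7)\le\log 32-\log\orb_\infty(G,\TT_7)$ for all $t\in[-1,1]$, it suffices to control the single function $h_G(t)=\log\orb_t(\Gamma,\TT_7)-\log\orb_t(G,\TT_7)$; establishing that $h_G$ is monotone, or has at most one interior critical point, reduces the verification to the endpoints $t=\pm 1$ together with a bounded interior check. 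The restriction to $[-1,1]$ is genuinely used here: as $t\to\infty$ every diameter tends to $1$ and the maximizer is instead governed by the gap between the largest and next-largest orbit sizes, so a different group wins, while $t\ge -1$ keeps $\orb_t(G,\TT_7)$ between the orbit-count mean $\orb_{-1}(G,\TT_7)$ and $\orb_1(G,\TT_7)$, the regime in which many small orbits still depress the mean enough to favor $\Gamma$.

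The main obstacle is \emph{realizability}. If one optimizes $\diam_1$ over all partitions of $462$ with no group-theoretic constraint, the optimum is a single orbit of size near $\sqrt{462}\approx 21.5$ with all remaining scales fixed, which would beat $\Gamma$; since no subgroup of $S_{11}$ realizes such a multiset, the proof cannot avoid using the arithmetic of actual orbit structures. Concretely, I expect the hard part to be ruling out (i) groups with a large transitive constituent, whose big orbit raises $\orb_\infty$ but raises $\orb_t$ even faster, and (ii) proper subdirect products, which fragment orbits in ways not captured by the product factorization. My plan is to bound the per-constituent contribution to $\diam_t$ so as to cut the candidates to finitely many shapes, and then to certify the remaining comparison by computing the orbit-size multisets on $\TT_7$ directly over the conjugacy classes of subgroups of $S_{11}$, passing from fixed-point counts of group elements (which depend only on cycle type, a scale being fixed exactly when it is a union of cycles of total size $6$) to orbit-size distributions by M\"obius inversion over the subgroup lattice. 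Verifying that $\diam_t(\Gamma,\TT_7)\ge\diam_t(G,\TT_7)$ on $[-1,1]$ for each surviving $G$, with equality only for conjugates of $\Gamma$, completes the argument.
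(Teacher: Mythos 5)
Your proposal is essentially the paper's proof: after the same two reductions (conjugacy invariance and dependence only on the orbit-size multiset) and the same orbit count for $\Gamma$ on $\TT_7$ (one orbit of size $32$, plus $5,20,30,30,10$ orbits of sizes $16,8,4,2,1$), the paper's argument is exactly the exhaustive machine verification you fall back on at the end --- computing the orbit-size multisets for representatives of all $3094$ conjugacy classes of subgroups of $S_{11}$ with GAP/SAGE and checking that $\diam_t(G,\TT_7)$ is maximized by $\Gamma$ throughout $[-1,1]$. The intermediate structural reductions you sketch (per-block efficiency of transpositions, monotonicity or single-critical-point behavior of $h_G$) appear nowhere in the paper and are dispensable given that final computation, which carries the entire burden of the proof in both your plan and the paper.
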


The theorem can be proved numerically using GAP and SAGE, as follows.  First, note that, for any group $G$ and any finite $G$-set $S$, the number $\diam_t(H,S)$ for any subgroup $H$ of $G$ depends only on the conjugacy class of $H$.  The group  $S_{11}$ has 3094 subgroups up to conjugacy.  (Those that are cyclic have order $1$, $2$, $3$, $4$, $5$, $6$, $7$, $8$, $9$, $10$, $11$, $12$, $14$, $15$, $18$, $20$, $21$, $24$, $28$, or $30$.)  Using GAP and SAGE, one can compute generators for representatives of all 3094 conjugacy classes, and for each of these representatives $G$ one can compute the $G$-orbits of $\TT_7$.  Then the functions $\diam_t(G,\TT_7)$ can be plotted and verified to achieve a maximum for $G = \Gamma$ for all $t$ in the interval $[-1,1]$.  

 The fact that $\diam_t(G,\TT_7)$ attains a maximum on the entire interval $[-1,1]$ for a single conjugacy class of subgroups of $S_{11}$ is in itself a surprising result.  One has
$$\diam_1(\Gamma, \TT_7) \approx 3.5250,$$
$$\diam_0(\Gamma, \TT_7) \approx  4.8324,$$  $$\diam_{-1}(\Gamma, \TT_7) \approx  6.6494.$$
Thus, for example, a consequence of Theorem \ref{mainconjecture} is that, for any subgroup $G$ of $S_{11}$, the maximal $G$-orbit size of $\TT_7$ is  at most $3.5251$  times the average $G$-orbit size of the elements of $\TT_7$, and the maximum ratio possible is obtained precisely by the group $G = \Gamma$ and its conjugates.

Given a subgroup $G$ of $S_{11}$ and a scale $s \in \TT$, we define the {\bf $(G,t)$-musicality of $s$} to be the quantity $$m(G,t,s) = \frac{|Gs|}{\orb_t(G,\TT_{|s|})},$$ which is the size of the $G$-orbit of $s$ relative to the $t$-power mean orbit size of the elements of $\TT_{|s|}$.  Thus, the quantity $$\diam_t(G,\TT_k) = \max\{m(G,t,s): s \in \TT_k\}$$ represents the largest possible $(G,t)$-musicality of any $k$-tonic scale.  Theorem \ref{mainconjecture} says that the heptatonic scales with the largest possible $(G,t)$-musicality for any subgroup $G$ of $S_{11}$ and any $t \in [-1,1]$ occur precisely for the group $G = \Gamma$ and its conjugates.  These scales comprise the unique maximal $\Gamma$-orbit of $\TT_7$ and consist of the 32 heptatonic scales
$$\left\{0, {1 \atop 2}, {3 \atop 4}, {5 \atop 6}, 7, {8 \atop 9}, {10 \atop 11} \right\},$$
or equivalently, starting, say, at C, the 32 scales
$$\left\{\mbox{C}, {\mbox{D}\flat \atop \mbox{D}}, {\mbox{E}\flat \atop \mbox{E}}, {\mbox{F} \atop \mbox{F}\sharp}, \mbox{G}, {\mbox{A}\flat \atop \mbox{A}}, {\mbox{B}\flat \atop \mbox{B}}\right\}$$
listed in Table 1.  This orbit contains the major scale, the harmonic and melodic minor scales, and many other heptatonic scales that figure prominently in Western and Indian classical music.  In fact, all 32 of these scales are among the 72  m\={e}\d{l}akarta ragas of Carnatic (South Indian) classical music standardized by Govindacharya in the 18th century and coincide with the 32 th\=ats of Hindustani classical music popularlized by the system created by Vishnu Narayan Bhatkhande (1860--1936), one of the most influential musicologists in the field of Hindustani classical music in the twentieth century.

\begin{table}
\caption{The 32 scales in the maximal $\Gamma$-orbit of $\TT_7$ (the 32 th\=ats of Hindustani classical music)}
\centering 
\begin{tabular}{l|lllllll} \hline
major, Ionian mode, or Bil\=awal th\=at & C & D & E & F & G & A & B \\ \hline
Mixolydian or Adonai malakh mode, or Khamaj  th\=at & C & D & E & F & G & A & B$\flat$ \\ \hline
harmonic major & C & D & E & F & G & A$\flat$ & B \\ \hline
Mixolydian b6 & C & D & E & F & G & A$\flat$ & B$\flat$ \\ \hline
Lydian mode, or Kalyan th\=at & C & D & E & F$\sharp$ & G & A & B \\ \hline
acoustic, or Lydian dominant & C & D & E & F$\sharp$ & G & A & B$\flat$ \\ \hline
 & C & D & E & F$\sharp$ & G & A$\flat$ & B \\ \hline
minor Lydian & C & D & E & F$\sharp$ & G & A$\flat$ & B$\flat$ \\ \hline
ascending melodic minor  & C & D & E$\flat$ & F & G & A & B \\ \hline
Dorian mode, or K\=afi th\=at & C & D & E$\flat$ & F & G & A & B$\flat$ \\ \hline
harmonic minor & C & D & E$\flat$ & F & G & A$\flat$ & B \\ \hline
natural minor, Aeolian mode, or \=As\=avari th\=at & C & D & E$\flat$ & F & G & A$\flat$ & B$\flat$ \\ \hline
diminished Lydian & C & D & E$\flat$ & F$\sharp$ & G & A & B \\ \hline
Ukrainian Dorian  & C & D & E$\flat$ & F$\sharp$ & G & A & B$\flat$ \\ \hline
Hungarian minor & C & D & E$\flat$ & F$\sharp$ & G & A$\flat$ & B \\ \hline
gypsy & C & D & E$\flat$ & F$\sharp$ & G & A$\flat$ & B$\flat$ \\ \hline
 & C & D$\flat$ & E & F & G & A & B \\ \hline
 & C & D$\flat$ & E & F & G & A & B$\flat$ \\ \hline
double harmonic, or flamenco mode & C & D$\flat$ & E & F & G & A$\flat$ & B \\ \hline
Phrygian dominant & C & D$\flat$ & E & F & G & A$\flat$ & B$\flat$ \\ \hline
M\=arv\=a th\=at & C & D$\flat$ & E & F$\sharp$ & G & A & B \\ \hline
 & C & D$\flat$ & E & F$\sharp$ & G & A & B$\flat$ \\ \hline
P\=urvi th\=at & C & D$\flat$ & E & F$\sharp$ & G & A$\flat$ & B \\ \hline
 & C & D$\flat$ & E & F$\sharp$ & G & A$\flat$ & B$\flat$ \\ \hline
Neapolitan major  & C & D$\flat$ & E$\flat$ & F & G & A & B \\ \hline
Phrygian raised sixth & C & D$\flat$ & E$\flat$ & F & G & A & B$\flat$ \\ \hline
Neapolitan minor  & C & D$\flat$ & E$\flat$ & F & G & A$\flat$ & B \\ \hline
Phrygian mode, or Bhairav th\=at &  C & D$\flat$ & E$\flat$ & F & G & A$\flat$ & B$\flat$ \\ \hline
 & C & D$\flat$ & E$\flat$ & F$\sharp$ & G & A & B \\ \hline
 & C & D$\flat$ & E$\flat$ & F$\sharp$ & G & A & B$\flat$ \\ \hline
Todi th\=at & C & D$\flat$ & E$\flat$ & F$\sharp$ & G & A$\flat$ & B \\ \hline
Bhairavi th\=at, or Pelog (approximate) & C & D$\flat$ & E$\flat$ & F$\sharp$ & G & A$\flat$ & B$\flat$ \\ \hline
\end{tabular}
\end{table}

The remainder of this paper is organized as follows.  In Section 2 we discuss power means and expected values, and in Section 3 we apply Section 2 to the study of the power mean orbit size and diameter of a finite $G$-set.  In Section 4 we apply Section 3 to the study of scales in $\ZZ_{12}$.  In Section 5 we focus on heptatonic scales in particular, while in Section 6 we briefly study hextonic scales, and in Section 7 we study pentatonic scales.  

I would like to thank the two reviewers for their thoughtful and invaluable input on the first draft of this paper.  As one of the reviewers pointed out, it is likely that the methods of this paper can be combined synergistically with other ways of understanding musical scales, such as those developed in \cite{car}, \cite{clo1}, \cite{clo2}, \cite{clo3}, \cite{dou}, \cite{hoo1}, \cite{hoo2}, \cite{tym}, and \cite{zab}.  It is not my intention that the results in this paper be definitive.   My aim is merely to provide yet another perspective on the already  well-developed mathematical theories of musical scales, one that, in my view, has also inspired some new and interesting problems regarding the theory of group actions.  Based on the reviewers comments, I also discuss some ways in which the theory might be amended or developed further.

The research for this paper was conducted  with undergraduate students James Allen and Paul Estrada and MS student Michael McCann at California State University, Channel Islands, in the academic year 2016--17, under the supervision of the author.  The idea for the project began with conversations between the author and undergraduate student Vickie Chen during a semester-long project on group theory in music for a first course in abstract algebra.  It is in those conversations that Vickie and I first came up with the idea of examining the maximal $\Gamma$-orbit of $\TT_7$, an idea that, to our pleasant surprise, eventually led to Theorem \ref{mainconjecture}.

\section{Power means and expected values}

Arithmetic means are generalized by what are known as {\it power means}.  If $S = \{s_1, \ldots, s_r\}$ is a finite set of cardinality $r = |S|$ and $X: S \longrightarrow \RR_{>0}$ a positive real-valued random variable on $S$ (with the uniform distribution on $S$), then, for any nonzero $t \in \RR$, the {\bf $t$-power mean} of $X$ is defined to be the positive real number
$$\MM_t(X) = \MM_t(X(s): s \in S) = \MM_t(x_1, \ldots, x_r )= \left(\frac{\sum_{i = 1}^r x_i^t}{r}\right)^{1/t},$$
where $x_i = X(s_i)$ for all $i$.  Equivalently, the arithmetic mean of $X$ is just $\MM_1(X)$, and one sets
$$\MM_t(X) = \MM_1(X^t)^{1/t}.$$
For $a = 0, \pm \infty$, one defines $$\MM_a(X) = \MM_a(X(s): s \in S) = \MM_a(x_1, \ldots, x_r) = \lim_{t \rightarrow a} \MM_t(x_1, \ldots, x_r).$$
It is well known that
$$\MM_0(x_1,\ldots, x_r) = \left(\prod_{i = 1}^r x_i\right)^{1/r}$$
is the geometric mean of the $x_i$, while
$$\MM_\infty(x_1, \ldots, x_r) = \max(x_1, \ldots, x_r)$$
and
$$\MM_{-\infty}(x_1, \ldots, x_r) = \min(x_1, \ldots, x_r)$$
are the maximum and minimum, respectively, of the $x_i$.

Let $[-\infty,\infty] = \RR \cup \{\infty,-\infty\}$ denote the set of all {\bf  extended real numbers}.  For all $t \in [-\infty,\infty] $ one has
$$\MM_t(cX) = c\MM_t(X)$$ for all $c > 0$ and
$$\MM_t(X^{-1}) = \MM_{-t}(X)^{-1}.$$   It follows that, if $XY = c$, that is, if $Y = c/X$, then $$\MM_t(X) \MM_{-t}(Y) = c$$ for all $t$.

%Note that one can recover the positive real numbers $x_1, x_2, \ldots, x_r$ from the function $\MM_t(x_1, x_2, \ldots, x_r)$, but only up to their {\it relative} frequencies in the $n$-tuple $(x_1,x_2,\ldots,x_n)$.  In other words, from the function $\MM_t(x_1, x_2, \ldots, x_r)$ one can determine the {\it set} $S = \{x_1,x_2,\ldots,x_r\}$ and for each $x \in S$ its  relative frequency of occurrences among the $x_j$.  This is all one can hope for, since one has
%\begin{align*}
%\MM_t(x_1, x_2, \ldots, x_r) & = \MM_t(x_1,x_1,x_2,x_2, \ldots, x_r,x_r) \\ & =  \MM_t(x_1,x_1,x_1,x_2,x_2,x_2, \ldots, x_r,x_r,x_r) \\ & = \cdots.
%\end{align*}
%The proof of this recovery result can easily be reduced to showing that if $a_1 x_1^t + \cdots + a_n x_n^t = 0$ for all $t$ and the $x_i$'s are pairwise distinct, then $a_1 = a_2 = \cdots = a_n = 0$.  The latter claim can be proved writing these equations for $t = 0,1,\ldots,n-1$ in the form $A(a_1, a_2, \ldots, a_n)^T = 0$, where $A$ is the (invertible) $n \times n$ Vandermonde matrix determined by the $n$ distinct numbers $x_1, \ldots, x_n$.  In fact this shows that $r$ and the values of  $\MM_t(x_1, x_2, \ldots, x_r)$ at $t = 1,2,3,\ldots,r$ determine the $x_i$ precisely.  The same can be said for the values at $t = \frac{1}{r}, \frac{2}{r}, \ldots, \frac{r-1}{r}, 1$. %  One may  conjecture that, more generally, the $x_i$ are precisely determined from $r$ and the values of  $\MM_t(x_1, x_2, \ldots, x_r)$ at any $r-1$ distinct values of $t$ (or, if not, then perhaps at any infinite number of values of $t$).

We may generalize the definition of $\MM_t(X)$ by assuming that $S$ is a finite probability space with probability distribution
$\PP: S \longrightarrow [0,1]$, which we assume is nonzero at all elements of $S$, and $X: S \longrightarrow \RR_{>0}$ is a positive real-valued random variable on $S$.  (Previously we implicitly assumed  that $\PP(s) = \frac{1}{|S|}$ for all $s \in S$.)  We may define the {\bf $t$-power expected value} of $X$ to be
$$\EE_t(X) = \EE_t(X(s): s \in S) = \left(\sum_{s \in S} \PP(s)X(s)^t\right)^{1/t}.$$
Equivalently, the expected value of $X$ is just $\EE_1(X)$, and one sets
$$\EE_t(X) = \EE_1(X^t)^{1/t}.$$
For $a = 0, \pm \infty$, one defines  $$\EE_a(X)   =  \lim_{t \rightarrow a} \EE_t(X).$$
One has
$$\EE_0(X) = \prod_{s \in S} X(s)^{\PP(s)},$$
while
$$\EE_\infty(X) = \max X(S)$$
and
$$\EE_{-\infty}(X) = \min X(S).$$

%Thus, for example, the standard deviation $\sigma(X)$ of $X$ is given by $$\sigma(X) = \EE_2(X-\EE_1(X)).$$  More generally, one could define the {\bf $(t,u)$-deviation of $X$} to be $\sigma_{t,u}(X) = \EE_u(X - \EE_t(X))$, so that $\sigma = \sigma_{1,2}$.

If $X$ is constant, then clearly $\EE_t(X)$ is a constant function of $t$ and one has
$\EE_t(X) = X(s)$ for all $t$ and all $s \in S$.  Conversely, if $\EE_t(X)$ is a constant function of $t$, then $\max X(S) = \min X(S)$, whence $X$ must be constant.

One can show that the function $\EE_t(X)$ of $t$ is differentiable with nonnegative derivative,  and is therefore nondecreasing, with respect to $t$.   Thus, one has
$$\min X(S) \leq \EE_t(X) \leq \max X(S)$$
for all $t$.  Moreover, if $X$ is nonconstant, then $\EE_t(X)$ has positive derivative, and therefore is strictly increasing, with respect to $t$.  In other words, if nonconstant, the function $\EE_t(X)$ of $t$ is a {\bf sigmoid function}, that is, it is a bounded differentiable function from $\RR$ to $\RR$ whose derivative is everywhere positive.  Thus it has horizontal asymptotes, specifically at $y = \max X(S)$ and $y = \min X(S)$ at $\infty$ and $-\infty$, respectively.  Thus, its graph is an ``S-shaped'' curve.
For an explicit example using the uniform probability distribution, see Figure 1.

\begin{figure}[ht!]
\centering
\includegraphics[width=75mm]{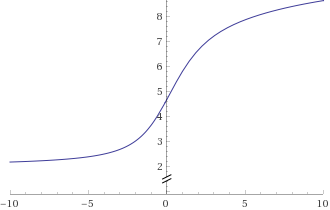}
\caption{Graph of $\MM_t(2,2,7,8,10) = \left(\frac{2^t + 2^t + 7^t + 8^t + 10^t)}{5}\right)^{1/t}$ on $[-10,10]$ \label{har5}}
\end{figure}

\section{Power mean orbit size of a finite $G$-set}

Throughout this section, $G$ denotes a group and $S$ a finite $G$-set.  One may readily generalize all of what follows to the situation where $S$ is also assumed to be a probability space; in that case, one simply replaces all $t$-power means with $t$-power expected values.  

For any $t \in [-\infty,\infty]$, we define the {\bf $t$-power mean orbit size of the elements of $S$} to be
$$\orb_t(G,S) = \MM_t(|Gs|: s \in S),$$
which, for $t \neq 0,\pm \infty$ is given by $$\orb_t(G,S) = \left(\frac{\sum_{s \in S} |Gs|^t}{|S|}\right)^{1/t} = \left(\frac{\sum_{O \in S/G} |O|^{t+1}}{|S|}\right)^{1/t} .$$  
As observed in the introduction, $\orb_1(G,S)$  is the average orbit size of the elements of $S$, and  $\orb_{-1}(G,S) = \frac{|S|}{|S/G|}$ is the average number of elements of $S$ in each orbit, while $\orb_\infty(G,s) = \max\{|Gs|: s \in S\}$
is the maximal orbit size of $S$ and $\orb_{-\infty}(G,s) = \min\{|Gs|: s \in S\}$
is the minimal orbit size.  % From $|S|$ and the function $\orb_t(G,S)$ one can recover all of the orbit sizes.  

For any $s \in S$, we define the {\bf $t$-power relative size of $s$} to be
$$|s|_{G,S,t} = \frac{|Gs|}{\orb_t(G,S)}.$$   The $t$-power relative size of $s$ is the size of the orbit of $s$ relative to (or normalized with respect to) the $t$-power mean orbit size of the elements of $S$.  The $t$-power mean of the orbit sizes of the elements of $S$ is equal to $$\MM_t(|Gs|: s\in S) = \orb_t(G,S)$$ of $S$, while  the $t$-power mean  of the $t$-power relative sizes of the elements of $S$ is  equal to $1$:
$$\MM_t(|s|_{G,S,t}: s \in S) = 1.$$ 
Because of this normalization property, if $H$ and $K$ are subgroups of $G$, then it makes sense to compare the values of $|s|_{H,S,t}$ and $|s|_{K,S,t}$ with each other.

The {\bf $t$-power diameter $\diam_t(G,S)$ of $S$}, as defined in the introduction, is equivalently the maximal $t$-power relative size of an element of $S$, that is, one has
$$\diam_t(G,S)  = \max\{|s|_{G,S,t} : s \in S\} = \frac{\max\{|Gs| : s \in S\} }{\orb_{t}(G,S)} =  \frac{\orb_{\infty}(G,S)}{\orb_{t}(G,S)}.$$  The function $\diam_t(G,S)$, if not identically $1$, has negative derivative with respect to $t$ and has limiting values of $1$ and  $\frac{\max\{|Gs|: s \in S\}}{\min\{|Gs|: s \in S\}}$ at $t = \infty$ and $t = -\infty$, respectively. 

For example, Figure 2 provides the graph of $\orb_t(G,S)$ and $\diam_t(G,S)$  for any $G$-set $S$ with orbit sizes $2$, $2$, $7$, $8$, and $10$.

\begin{figure}[ht!]
\centering
\includegraphics[width=120mm]{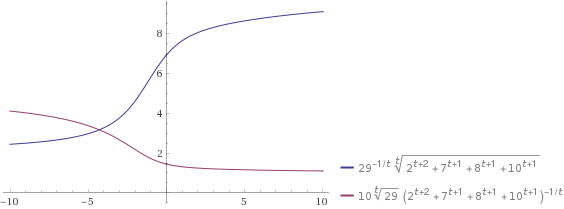}
\caption{Graph of $\orb_t(G,S)= \left(\frac{2^{t+1} + 2^{t+1} + 7^{t+1} + 8^{t+1} + 10^{t+1}}{29}\right)^{1/t}$ and $\diam_t(G,S) = 10\left(\frac{2^{t+1} + 2^{t+1} + 7^{t+1} + 8^{t+1} + 10^{t+1}}{29}\right)^{-1/t}$ on $[-10,10]$ for any $G$-set $S$ with orbit sizes  $2$, $2$, $7$, $8$, and $10$.  \label{har5}}
\end{figure}

\section{Musicality of scales and groups}

Throughout this section, $G$ denotes a subgroup of $S_{11}$, which acts on the set $\TT$ of all $2^{11}$ possible tonic scales, as well as on the subsets 
$\TT_k$ of all ${11 \choose k-1}$ possible $k$-tonic scales, for all $k = 1, 2, \ldots, 12$, as described in the introduction, and $t$ denotes a number or variable with values in the extended reals $[-\infty,\infty] $.

For any $s \in \TT$, that is, for any tonic scale $s$, we define the {\bf  $(G,t)$-musicality} of $s$, or of the orbit $Gs$, to be  $$m(G,t,s) = |s|_{G, \TT_{|s|},t} = \frac{|Gs|}{\orb_t(G,\TT_{|s|})},$$ which is the $t$-power relative size of $s$ in $\TT_{|s|}$ (not in $\TT$).  Musicality defines a natural function 
$$m:  \operatorname{Subgp}(S_{11}) \times [-\infty,\infty] \times  \TT \longrightarrow [1,\infty),$$ where $\operatorname{Subgp}(S_{11})$ denotes the lattice of all subgroups of $S_{11}$. For a fixed $G$, $t$, and $k$, the $t$-power mean $\MM_t(m(G,t,s): s \in \TT_k)$ of $m(G,t,s)$ over all $k$-tonic scales $s$  is equal to $1$.  The $(G,t)$-musicality $m(G,t,s)$ of a $k$-tonic scale $s$ is directly proportional to the size of its $G$-orbit.  The constant of proportionality depends on $t$ and is defined natually in such a way that one can meaningfully compare the values for various subgroups $G$ of $S_{11}$ for a fixed $s$, or compare the values for various scales $s$ for a fixed group $G$.  

Observe that
$$\diam_t(G, \TT_k) = \frac{\max\{|Gs|: s \in S\}}{\orb_t(G,\TT_{k})} = \max\{m(G,t,s): s \in \TT_k\}.$$  
Thus $\diam_t(G, \TT_k)$ is the largest possible $(G,t)$-musicality of a scale in $\TT_k$, or equivalently it is the $(G,t)$-musicality of any maximal $G$-orbit of $\TT_k$.  If we let $\TT_k//G$ denote the set of all maximal $G$-orbits of $\TT_k$, then the union $\TT_{k,G}  = \bigcup (\TT_k//G)$ is the set of all scales in $\TT_k$ that have the largest possible $(G,t)$-musicality (namely, $\diam_t(G, \TT_k)$) for any $t$. We call the scales in the set $\TT_{k,G}$  the {\bf $k$-tonic scales of $G$}.    Our philosophy is that the scales in the set $\TT_{k,G}$ should be regarded as the optimally musical $k$-tonic scales relative to $G$.

Let $G$ be a subgroup of $S_n$.  We define a {\bf signature of $G$ (in $S_n$)} to be a list  $(n_1,n_2,\ldots, n_k)$ of the $G$-orbit sizes of $\{\{1\},\{2\},\ldots,\{n\}\}$, listed in any particular order.    In particular, one has $n = n_1 + n_2 + \cdots + n_k$ for any signature $(n_1,n_2,\ldots, n_k)$ of $G$ in $S_n$.  We say that $G$ {\bf acts without crossings (in $S_n$)},  if the $G$-orbits of the $G$-set $\{\{1\},\{2\},\ldots,\{n\}\}$ are all of the form $\{\{a\},\{a+1\},\ldots,\{a+k\}\}$, where the addition here is ordinary addition of positive integers, not addition modulo $n$.   (In the case at hand, $n = 11$, and $0$ is omitted from the discussion because we have chosen to leave $0$ fixed by $S_{11}$.)  It is clear that every subgroup of $S_n$ is conjugate to a subgroup that acts without crossings.  In fact, the conjugates of $G$ that act without crossings in $S_n$ are in one-to-one correspondence with the signatures of $G$ in $S_n$.  If $G$ acts without crossings in $S_n$, we define {\bf the signature of $G$ (in $S_n$)} to be the list $(n_1,n_2,\ldots, n_k)$ of the orbit sizes of $\{\{1\},\{2\},\ldots,\{n\}\}$, listed in order so that the orbits are $\{\{1\},\{2\},\ldots,\{n_1\}\}$, $\{\{n_1+1\},\{n_1+2\},\ldots,\{n_1+n_2\}\}$, etc.

Let $n_1, n_2, \ldots, n_d$ be a sequence of positive integers whose sum is $11$.  Then the group $S_{n_1} \times S_{n_2} \times \cdots \times S_{n_d}$ naturally embeds into $S_{11}$ in the following way.  The first factor $S_{n_1}$ acts on the first $n_1$ numbers, $1,2,\ldots, n_1$.  The next factor $S_{n_2}$ acts on the next $n_2$ numbers, $n_1+1, n_1+2, \ldots, n_1+n_2$.  And so onward to the last factor $S_{n_d}$, which acts on the last $n_d$ numbers, $n_1+\cdots+n_{d-1}+1$ to $11$.  We denote the image of the embedding $$\Phi: S_{n_1} \times S_{n_2} \times \cdots \times S_{n_d} \longrightarrow S_{11}$$ described above by $S_{n_1,n_2, \ldots, n_d}$.   We say that a {\bf twelve tone group of signature at most $(n_1, n_2, \ldots n_d)$} is a subgroup of $S_{n_1,n_2, \ldots, n_d}$ of the form $\Phi(G_1 \times G_2 \times \cdots \times G_d)$, where $G_i$ is a subgroup of $S_{n_i}$ for all $i$. Note that the group $S_{n_1,n_2, \ldots, n_d} \cong S_{n_2} \times \cdots \times S_{n_d}$ is the largest twelve tone group of signature at most $(n_1, n_2, \ldots n_d)$ in the sense that it contains every twelve tone group of signature at most $(n_1, n_2, \ldots n_d)$.   For this reason we call it {\bf the maximal twelve tone group of signature $(n_1, n_2, \ldots n_d)$}.   For example, the group $S_{11}$ the maximal twelve tone group of signature $(11)$, and therefore every subgroup of $S_{11}$ is a twelve tone group of signature $(11)$.  Note that all twelve tone groups act without crossings, and the signature of the maximal twelve tone group of signature $(n_1, n_2, \ldots n_d)$ is $(n_1, n_2, \ldots n_d)$.  

Of course, there are other natural embeddings of $S_{n_1} \times S_{n_2} \times \cdots \times S_{n_d}$ in $S_{11}$.  For example, $S_4 \times S_7$ can be embedded in $S_{11}$ by allowing the first factor to act, say, on $\{2,4,7,8\}$ and the second factor on $\{1,3,5,6,9,10,11\}$.  Such an embedding does not yield a twelve tone group with signature $(4,7)$.   In loose terminology, twelve tone groups do not allow the factors to act in a ways that are ``intertwined'': no ``crossing'' is allowed.  Philosophically, this restriction can be motivated as follows.  The chromatic scale has a linear ordering, and our goal is to understand how various scales may be transformed from one to the other.  The most obvious and most common way in which this is done is by changing various notes of the scale by applying operations $\natural, \sharp, \flat$, which can be modeled by permuting neighboring notes.  The notion of a twelve tone group is meant to capture this notion of locality.  

Even without these locality restrictions, our mathematical analysis of twelve tone groups will apply equally well to any of the embeddings of $S_{n_1} \times S_{n_2} \times \cdots \times S_{n_d}$ in $S_{11}$ that allow crossings, because we can simply relabel the numbers $0$ through $11$ so that  there are no crossings.  We say that a {\bf maximal twelve tone group of signature $(n_1, n_2, \ldots, n_d)$ with or without crossings} is a subgroup of $S_{11}$ that is the result of applying some (inner) automorphism of $S_{11}$ to the maximal twelve tone group $S_{n_1, n_2,\ldots,n_d}$.  We note the following proposition, whose proof is elementary.

\begin{proposition}
There are $2^{10} = 1024$ possible signatures, hence $1024$ maximal twelve tone groups, corresponding to $10$ independent choices of whether or not to separate $i$ from $i+1$, for $i = 1,\ldots,10$.  There are $678570$ maximal twelve tone groups with or without crossings, corresponding to the $678570$ possible partitions of an eleven element set.  Up to isomorphism, there are $p(11) = 56$ maximal twelve tone groups (or maximal twelve tone groups with or without crossings), corresponding to the $56$ possible partitions of the number $11$.
\end{proposition}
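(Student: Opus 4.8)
The plan is to recognize that every maximal twelve tone group, with or without crossings, is a \textit{Young subgroup} of $S_{11}$: a subgroup of the form $Y(P) = \prod_{A \in P} \Sym(A)$ attached to a set partition $P$ of $\{1,2,\ldots,11\}$, where $\Sym(A)$ permutes the elements of the block $A$ and fixes the rest. First I would record that $P \mapsto Y(P)$ is a bijection from set partitions of $\{1,\ldots,11\}$ onto the maximal twelve tone groups with or without crossings. Surjectivity is immediate from the definitions: the standard group $S_{n_1,n_2,\ldots,n_d}$ equals $Y(P_0)$, where $P_0$ is the partition of $\{1,\ldots,11\}$ into consecutive intervals of lengths $n_1,\ldots,n_d$, and conjugation by $\pi \in S_{11}$ carries $Y(P_0)$ to $Y(\pi P_0)$; hence the conjugates of the groups $S_{n_1,\ldots,n_d}$ are exactly the groups $Y(P)$. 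For injectivity, the crucial observation is that the orbits of $Y(P)$ acting on $\{\{1\},\{2\},\ldots,\{11\}\}$ are precisely the blocks of $P$: the factor $\Sym(A)$ acts transitively on the singletons indexed by $A$ and fixes all others, and a singleton block contributes a fixed point. Thus $P$ is recovered from $Y(P)$, and the correspondence is a bijection.

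Granting this, the three counts follow. Without crossings, a group is determined by its signature $(n_1,\ldots,n_d)$, which is an ordered composition of $11$; since the excerpt records that the signature of $S_{n_1,\ldots,n_d}$ is $(n_1,\ldots,n_d)$, distinct compositions give distinct signatures and hence distinct groups, so these groups are in bijection with compositions of $11$. A composition is specified by deciding independently, at each of the $10$ gaps between $i$ and $i+1$ for $i = 1,\ldots,10$, whether to cut, giving $2^{10} = 1024$. With or without crossings, the bijection above puts these groups in one-to-one correspondence with all set partitions of an eleven element set, whose number is the Bell number $678570$. Finally, up to isomorphism, conjugation in $S_{11}$ sends $Y(P)$ to $Y(\pi P)$ and can realize any rearrangement of the blocks, so two such groups are conjugate exactly when their block-size multisets coincide, that is, when they determine the same partition of the integer $11$; since conjugate subgroups are isomorphic and $S_{n_1,\ldots,n_d} \cong S_{n_1} \times \cdots \times S_{n_d}$ depends only on the multiset $\{n_1,\ldots,n_d\}$, the classes are indexed by the $p(11) = 56$ partitions of $11$.

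The only delicate point, and the one I would treat most carefully, is the ``up to isomorphism'' count. Injectivity everywhere else is handled uniformly by recovering the orbit partition from the group, but abstract isomorphism is a weaker relation than conjugacy, and small coincidences among products of symmetric groups are conceivable (for instance $S_1$ is trivial and $S_2 \cong \ZZ_2$, and orders can collide as in $2 \cdot 2 \cdot 3! = 4!$). I would resolve this by reading ``up to isomorphism'' as up to conjugacy in $S_{11}$, for which the multiset of orbit sizes on $\{1,\ldots,11\}$ is a complete and visibly faithful invariant: the $p(11)$ standard representatives $S_{n_1,\ldots,n_d}$ have pairwise distinct orbit-size multisets, hence are pairwise non-conjugate, and every group of this type is conjugate to one of them. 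If genuine abstract isomorphism is intended, the residual task is to verify that a direct product of symmetric groups determines its multiset of factors of size at least $2$ --- using, say, that the abelianization counts the nontrivial factors and the nonabelian alternating composition factors $A_m$ recover the larger parts --- a routine if slightly fussy check that does not alter the final count of $56$.
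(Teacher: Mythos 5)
Your proof is correct. The paper offers no argument of its own here (it simply declares the proposition elementary), and your Young-subgroup bijection $P \mapsto Y(P) = \prod_{A\in P}\Sym(A)$, with orbits recovering the set partition, compositions counted by the $2^{10}$ gap choices, and conjugacy classes indexed by partitions of $11$, is exactly the natural way to fill in the details. Your care on the ``up to isomorphism'' point is warranted and your sketch suffices: the abelianization $\ZZ_2^k$ counts parts of size at least $2$, the alternating composition factors recover parts of size at least $5$, and the counts of $\ZZ_3$ and $\ZZ_2$ composition factors then separate the parts of sizes $2$, $3$, $4$ (note that composition factors alone would not distinguish, e.g., $S_4$ from $S_3\times S_2\times S_2$), while the constraint that the parts sum to $11$ pins down the number of singleton parts, so distinct partitions of $11$ do give non-isomorphic groups and the count $p(11)=56$ stands under either reading.
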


\section{Heptatonic scales}

In this section we are primarily interested in the action of subgroups of $S_{11}$ on the set $\TT_7$ of all heptatonic (tonic) scales, which has 462 elements.

The following proposition gives a formula for $\diam_t(G,\TT_7)$ for any maximal twelve tone group $G$.   {\bf Multisets} are a generalization of sets where, as with tuples, repetition is allowed, but, as with sets, order doesn't matter.

\begin{proposition}\label{prop:orb}
Let $G$ be a maximal twelve tone subgroup of $S_{11}$ of signature $(n_1, n_2, \ldots, n_d)$ with or without crossings.  The multiset of $G$-orbit sizes of $\TT_7$ is the multiset
$$\left\{{n_1 \choose k_1} {n_2 \choose k_2} \cdots  {n_d \choose k_d}: k_1, k_2, \ldots, k_d \in \ZZ_{> 0}, k_1+k_2 + \cdots + k_d = 6, \forall i \ {k_i \leq n_i} \right\}$$
of positive integers.
Therefore, one has
 $$\orb_t(G,\TT_7)  =  \left(\frac{1}{462}  \sum_{{k_1+k_2 + \cdots + k_d = 6} \atop {k_i \leq n_i}}  \left( {n_1 \choose k_1}  {n_2 \choose k_2} \cdots  {n_d \choose k_d}  \right)^{t+1 } \right)^{1/t}$$
for all $t \neq 0, \pm \infty$, the maximal $G$-orbits of $\TT_7$ have size
$$\orb_\infty(G,\TT_7) = \max_{{k_1+k_2 + \cdots + k_d = 6} \atop {k_i \leq n_i}} {n_1 \choose k_1} {n_2 \choose k_2} \cdots  {n_d \choose k_d},$$
and one has
$$\diam_t(G,\TT_7) = \frac{\orb_\infty(G,\TT_7) }{\orb_t(G,\TT_7) }.$$
One also has
$$462 = {11 \choose 6} =  \sum_{{k_1+k_2 + \cdots + k_d = 6} \atop {k_i \leq n_i}} {n_1 \choose k_1} {n_2 \choose k_2} \cdots  {n_d \choose k_d}$$
and the number of $G$-orbits of $\TT_7$ is equal to 
$$\sum_{{k_1+k_2 + \cdots + k_d = 6} \atop {k_i \leq n_i}} 1.$$
\end{proposition}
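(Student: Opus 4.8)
The plan is to reduce the entire proposition to a single combinatorial bijection between the $G$-orbits of $\TT_7$ and the ways a heptatonic scale can distribute its six non-tonic notes among the blocks on which $G$ acts. First I would invoke the conjugation-invariance of the orbit-size multiset: if $G' = \sigma G \sigma^{-1}$ for some $\sigma \in S_{11}$, then $s \mapsto \sigma \cdot s$ is a bijection of $\TT_7$ carrying each $G$-orbit onto a $G'$-orbit of equal cardinality, so the multiset of orbit sizes depends only on the conjugacy class. Since a maximal twelve tone group with or without crossings is by definition conjugate to $S_{n_1, n_2, \ldots, n_d}$, I may assume without loss of generality that $G = S_{n_1, n_2, \ldots, n_d}$ acts without crossings, with blocks $B_1 = \{1, \ldots, n_1\}$, $B_2 = \{n_1+1, \ldots, n_1+n_2\}$, and so on; the $i$-th factor $S_{n_i}$ acts as the full symmetric group on $B_i$, fixes everything outside $B_i$, and (as a subgroup of $S_{11}$) fixes $0$.

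Next, writing a heptatonic scale as $s = \{0\} \cup A$ with $A \subseteq \{1, \ldots, 11\}$ and $|A| = 6$, I associate to $s$ its \textit{intersection profile} $\kappa(s) = (k_1, \ldots, k_d)$, where $k_i = |A \cap B_i|$; these satisfy $0 \le k_i \le n_i$ and $k_1 + \cdots + k_d = 6$. The two key claims are (i) $\kappa$ is constant on $G$-orbits, since every $g \in G$ preserves each block setwise and fixes $0$; and (ii) $G$ acts transitively on the scales sharing any fixed admissible profile. For (ii), given two such scales, for each $i$ the sets $A \cap B_i$ and $A' \cap B_i$ are both $k_i$-subsets of $B_i$, so the full symmetric group $S_{n_i}$ supplies an element carrying one to the other; assembling these across all $i$ gives a single $g = (\sigma_1, \ldots, \sigma_d) \in G$ with $g \cdot s = s'$. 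Claims (i) and (ii) together show that the $G$-orbits are in bijection with the admissible profiles, and the orbit attached to $(k_1, \ldots, k_d)$ has cardinality equal to the number of $A$ realizing it, namely $\binom{n_1}{k_1} \cdots \binom{n_d}{k_d}$ by independent choices within the blocks. This is exactly the asserted multiset of orbit sizes.

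With the orbit-size multiset established, the remaining assertions are immediate bookkeeping. Substituting these sizes into the general formula $\orb_t(G,S) = \left( \frac{\sum_{O \in S/G} |O|^{t+1}}{|S|} \right)^{1/t}$ from Section 3, with $|S| = |\TT_7| = 462$, yields the displayed expression for $\orb_t(G,\TT_7)$; the case $t = \infty$ gives $\orb_\infty$ as the maximum of the same products; and $\diam_t = \orb_\infty / \orb_t$ is just its definition. The identity $462 = \binom{11}{6} = \sum \binom{n_1}{k_1} \cdots \binom{n_d}{k_d}$, summed over admissible profiles, is precisely the Chu--Vandermonde convolution for $n_1 + \cdots + n_d = 11$ and total degree $6$, and it reconfirms that the orbits partition $\TT_7$. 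Finally, the number of orbits equals the number of admissible profiles, i.e. the sum of $1$ over the same index set.

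The proof is short, and the one genuinely load-bearing step is the transitivity claim (ii): it holds only because each factor is the \textit{full} symmetric group $S_{n_i}$, which acts transitively on the $k_i$-subsets of its block. For a general, non-maximal twelve tone group the factors $G_i \subsetneq S_{n_i}$ need not be transitive on $k_i$-subsets, so the intersection profile would no longer separate the orbits and the clean product formula would break down. The only point of the statement itself that requires care is the index set: each $k_i$ should range over $\{0, 1, \ldots, n_i\}$ rather than over the strictly positive integers, since a six-note scale may miss one or more blocks entirely, and allowing $k_i = 0$ is exactly what makes the Chu--Vandermonde count collapse to $462$.
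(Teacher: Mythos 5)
Your proof is correct. There is nothing in the paper to compare it against in detail: the paper's entire proof of this proposition is the sentence ``The proof is elementary,'' so what you have written is a fleshed-out version of the argument the author presumably had in mind --- reduction to the crossing-free representative $S_{n_1,\ldots,n_d}$ by conjugation invariance of the orbit-size multiset, the block-intersection profile $\kappa(s)=(|A\cap B_1|,\ldots,|A\cap B_d|)$ as a complete orbit invariant, transitivity on each profile class because each factor is the \emph{full} symmetric group on its block, the product count $\binom{n_1}{k_1}\cdots\binom{n_d}{k_d}$, and Vandermonde's identity for the total $462$. Your closing remark is not a quibble but a genuine correction to the statement as printed: the condition $k_i\in\ZZ_{>0}$ must be $0\le k_i\le n_i$. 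This is confirmed by the paper's own computations: for $\Gamma$, of signature $(2,2,2,1,2,2)$, requiring every $k_i\ge 1$ would leave only the single profile $(1,1,1,1,1,1)$, whereas the paper correctly reports $96$ orbits --- the coefficient of $x^6$ in $(1+x+x^2)^5(1+x)$ --- including $10$ fixed points, each of which forces $k_i=0$ on some blocks. You are also right that transitivity is the one load-bearing step and is exactly where maximality of the twelve tone group is used; for a proper subgroup $G_i\subsetneq S_{n_i}$ the profile would no longer separate orbits and the product formula would fail.
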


\begin{proof}
The proof is elementary.
\end{proof}

Clearly, this proposition generalizes to $k$-tonic scales by replacing $\TT_7$ everywhere in the proposition with $\TT_k$, replacing the number $6$ everywhere with the number $k-1$, and replacing the number 462 with ${11 \choose k-1}$.  (It even works for an $N$-note chromatic scale by replacing $11$ everywhere with $N-1$.)  It also generalizes to $k$-atonic scales.

Using the proposition, one can compute $\diam_t(G,\TT_7)$ for all 56 maximal twelve tone groups $G$, say, for the critical values $t = 1,0,-1$.  These values are listed in Table 2 in descending order of $\diam_1(G,\TT_7)$.

\begin{table} 
\caption{Maximal twelve tone groups}
\scriptsize
\centering 
{\begin{tabular}{l||l|l|l|l|l}
Signature &  Maximal orbits & $\#$ orbits & $\diam_1(G,\TT_7)$ & $\diam_0(G,\TT_7)$ &  $\diam_{-1}(G,\TT_7)$  \\\hline \hline
$(2, 2, 2, 2, 2, 1)$ & 1 of size 32 & 96 & 3.5250 & 4.8324 & 6.6494 \\ \hline
$(3, 2, 2, 2, 2)$ & 1 of size 48 & 61 & 3.0689 & 4.3060 & 6.3377 \\ \hline
$(2, 2, 2, 2, 1, 1, 1)$ & 3 of size 16 & 131 & 2.7603 & 3.5264 & 4.5368 \\ \hline
$(4, 2, 2, 2, 1)$ & 1 of size 48 & 48 & 2.6679 & 3.4917 & 4.9870 \\ \hline
$(3, 2, 2, 2, 1, 1)$ & 3 of size 24 & 83 & 2.4115 & 3.1423 & 4.3117 \\ \hline
$(5, 2, 2, 2)$ & 1 of size 80 & 26 & 2.3864 & 3.1193 & 4.5022 \\ \hline
$(4, 3, 2, 2)$ & 1 of size 72 & 31 & 2.3203 & 3.1114 & 4.8312 \\ \hline
$(2, 2, 2, 1, 1, 1, 1, 1)$ & 10 of size 8 & 179 & 2.1513 & 2.5733 & 3.0996 \\ \hline
$(3, 3, 2, 2, 1)$ & 3 of size 36 & 53 & 2.1027 & 2.8000 & 4.1299 \\ \hline
$(4, 2, 2, 1, 1, 1)$ & 3 of size 24 & 65 & 2.0921 & 2.5480 & 3.3766 \\ \hline
$(4, 4, 2, 1)$ & 1 of size 72 & 25 & 2.0182 & 2.5229 & 3.8961 \\ \hline
$(6, 2, 2, 1)$ & 1 of size 80 & 18 & 1.9833 & 2.3882 & 3.1169 \\ \hline
$(3, 2, 2, 1, 1, 1, 1)$ & 10 of size 12 & 113 & 1.8844 & 2.2930 & 2.9351 \\ \hline
$(5, 2, 2, 1, 1)$ & 3 of size 40 & 35 & 1.8788 & 2.2763 & 3.0303 \\ \hline
$(3, 3, 3, 2)$ & 3 of size 54 & 34 & 1.8293 & 2.4951 & 3.9740 \\ \hline
$(4, 3, 2, 1, 1)$ & 3 of size 36 & 42 & 1.8261 & 2.2705 & 3.2727 \\ \hline
$(7, 2, 2)$ & 1 of size 140 & 9 & 1.8033 & 2.1611 & 2.7273 \\ \hline
$(5, 4, 2)$ & 1 of size 120 & 14 & 1.8032 & 2.2539 & 3.6364 \\ \hline
$(4, 4, 3)$ & 1 of size 108 & 16 & 1.7533 & 2.2482 & 3.7403 \\ \hline
$(6, 3, 2)$ & 1 of size 120 & 12 & 1.7228 & 2.1281 & 3.1169 \\ \hline
$(2, 2, 1, 1, 1, 1, 1, 1, 1)$ & 35 of size 4 & 245 & 1.6709 & 1.8779 & 2.1212 \\ \hline
$(3, 3, 2, 1, 1, 1)$ & 10 of size 18 & 72 & 1.6480 & 2.0433 & 2.8052 \\ \hline
$(5, 3, 2, 1)$ & 3 of size 60 & 23 & 1.6364 & 2.0284 & 2.9870 \\ \hline
$(4, 2, 1, 1, 1, 1, 1)$ & 10 of size 12 & 88 & 1.6325 & 1.8594 & 2.2857 \\ \hline
$(4, 3, 3, 1)$ & 3 of size 54 & 27 & 1.5907 & 2.0232 & 3.1558 \\ \hline
$(4, 4, 1, 1, 1)$ & 3 of size 36 & 34 & 1.5849 & 1.8411 & 2.6494 \\ \hline
$(6, 2, 1, 1, 1)$ & 3 of size 40 & 24 & 1.5577 & 1.7428 & 2.0779 \\ \hline
$(6, 4, 1)$ & 1 of size 120 & 10 & 1.4994 & 1.7256 & 2.5974 \\ \hline
$(5, 2, 1, 1, 1, 1)$ & 10 of size 20 & 47 & 1.4704 & 1.6611 & 2.0346 \\ \hline
$(8, 2, 1)$ & 1 of size 140 & 6 & 1.4667 & 1.6141 & 1.8182 \\ \hline
$(3, 2, 1, 1, 1, 1, 1, 1)$ & 35 of size 6 & 154 & 1.4667 & 1.6733 & 2.0000 \\ \hline
$(3, 3, 3, 1, 1)$ & 10 of size 27 & 46 & 1.4388 & 1.8207 & 2.6883 \\ \hline
$(4, 3, 1, 1, 1, 1)$ & 10 of size 18 & 57 & 1.4289 & 1.6569 & 2.2208 \\ \hline
$(7, 2, 1, 1)$ & 3 of size 70 & 12 & 1.4224 & 1.5770 & 1.8182 \\ \hline
$(5, 4, 1, 1)$ & 3 of size 60 & 19 & 1.4220 & 1.6448 & 2.4675 \\ \hline
$(5, 3, 3)$ & 3 of size 90 & 15 & 1.4218 & 1.8074 & 2.9221 \\ \hline
$(7, 4)$ & 1 of size 210 & 5 & 1.3618 & 1.5615 & 2.2727 \\ \hline
$(6, 3, 1, 1)$ & 3 of size 60 & 16 & 1.3583 & 1.5529 & 2.0779 \\ \hline
$(9, 2)$ & 1 of size 252 & 3 & 1.3469 & 1.4752 & 1.6364 \\ \hline
$(6, 5)$ & 1 of size 200 & 6 & 1.3379 & 1.5416 & 2.5974 \\ \hline
$(2, 1, 1, 1, 1, 1, 1, 1, 1, 1)$ & 126 of size 2 & 336 & 1.2941 & 1.3704 & 1.4545 \\ \hline
$(3, 3, 1, 1, 1, 1, 1)$ & 35 of size 9 & 98 & 1.2857 & 1.4911 & 1.9091 \\ \hline
$(5, 3, 1, 1, 1)$ & 10 of size 30 & 31 & 1.2848 & 1.4802 & 2.0130 \\ \hline
$(5, 5, 1)$ & 3 of size 100 & 11 & 1.2727 & 1.4694 & 2.3810 \\ \hline
$(8, 3)$ & 1 of size 210 & 4 & 1.2725 & 1.4383 & 1.8182 \\ \hline
$(4, 1, 1, 1, 1, 1, 1, 1)$ & 35 of size 6 & 119 & 1.2692 & 1.3569 & 1.5455 \\ \hline
$(7, 3, 1)$ & 3 of size 105 & 8 & 1.2375 & 1.4053 & 1.8182 \\ \hline
$(6, 1, 1, 1, 1, 1)$ & 10 of size 20 & 32 & 1.2171 & 1.2718 & 1.3853 \\ \hline
$(8, 1, 1, 1)$ & 3 of size 70 & 8 & 1.1538 & 1.1779 & 1.2121 \\ \hline
$(5, 1, 1, 1, 1, 1, 1)$ & 35 of size 10 & 63 & 1.1458 & 1.2122 & 1.3636 \\ \hline
$(3, 1, 1, 1, 1, 1, 1, 1, 1)$ & 126 of size 3 & 210 & 1.1379 & 1.2211 & 1.3636 \\ \hline
$(7, 1, 1, 1, 1)$ & 10 of size 35 & 16 & 1.1149 & 1.1508 & 1.2121 \\ \hline
$(10, 1)$ & 1 of size 252 & 2 & 1.0820 & 1.0864 & 1.0909 \\ \hline
$(9, 1, 1)$ & 3 of size 126 & 4 & 1.0645 & 1.0765 & 1.0909 \\ \hline
$(11)$ & 1 of size 462 & 462 & 1 & 1 & 1 \\ \hline
$(1, 1, 1, 1, 1, 1, 1, 1, 1, 1, 1)$ & 462 of size 1 & 462 & 1 & 1 & 1 \\ \hline
\end{tabular}}
\label{}
\end{table}

As the most important example, consider the maximal twelve tone group $\Gamma = S_{2,2,2,1,2,2}$ with signature $(2,2,2,1,2,2)$.  Equivalently, $\Gamma$ is the subgroup  of $S_{11}$ generated by the set $\{(1 \ 2), (3\ 4), (5\  6),(8\ 9),(10\  11)\}$, and it is isomorphic to $S_{2} \times S_{2} \times S_{2} \times S_1 \times S_2 \times S_2 \cong \ZZ_2^5$ and has order $32$.  The orbits in $\TT_7$ therefore have 1, 2, 4, 8, 16, or 32 elements. Using Proposition \ref{prop:orb} we find that $\TT_7$ under the action of $\Gamma$ has:
\begin{enumerate}
\item $1$ orbit of size $32$,
\item $5$ orbits of size $16$,
\item $20$ orbits of size $8$,
\item $30$ orbits of size $4$,
\item $30$ orbits of size $2$,
\item $10$ orbits of size $1$,
\end{enumerate}
for a total of $96$ orbits.  One therefore has
$$\orb_t(\Gamma,\TT_7)  = \left(\frac{32^{t+1}+ 5 \cdot 16^{t+1}+ 20 \cdot 8^{t+1}+30 \cdot 4^{t+1}+ 30 \cdot 2^{t+1}+10 \cdot 1^{t+1}}{462}\right)^{1/t}$$
and
$$\diam_t(\Gamma,\TT_7)  = 32\left( \frac{ 462}{32^{t+1}+ 5 \cdot 16^{t+1}+ 20 \cdot 8^{t+1}+30 \cdot 4^{t+1}+ 30 \cdot 2^{t+1}+10 \cdot 1^{t+1} }\right)^{1/t}.$$
In particular, one has
%$$\orb_1(\Gamma,\TT_7) = \frac{4194}{462} \approx 9.0779$$
$$\diam_1(\Gamma,\TT_7) =32 \cdot  \frac{462}{4194} \approx 3.5250$$
and
%$$\orb_{-1}(\Gamma,\TT_7) = \frac{462}{96} = 4.8125$$
$$\diam_{-1}(\Gamma,\TT_7) = 32 \cdot \frac{96}{462} \approx 6.6494.$$
The heptatonic scales of $\Gamma$ comprise the unique maximal $\Gamma$-orbit of $\TT_7$, which are the 32 scales
$$\left\{0, {1 \atop 2}, {3 \atop 4}, {5 \atop 6}, 7, {8 \atop 9}, {10 \atop 11} \right\},$$
or
$$\{0, 2^\dagger, 4^\dagger, 6^\dagger, 7, 9^\dagger,11^\dagger\},$$
where each of the $\dagger$'s is either a $\natural$ ($+0$) or a $\flat$ ($-1$).   This unique maximal $\Gamma$-orbit consists of the 32  th\=ats of Hindustani classical music popularized by Bhatkhande.   

One may also consider the maximal twelve tone group $\Gamma_{-}$ of signature $(2,2,1,2,2,2)$, an action that globally fixes $5$ instead of $7$.  Since changing the order of the numbers in the signature does not affect any of the relevant values, the values above calculated for $\Gamma_{-}$ are the same as for $\Gamma$.  The maximal $\Gamma_{-}$-orbit of $\TT_7$ contains the 32 scales
$$\left\{0, {1 \atop 2}, {3 \atop 4}, 5, {6 \atop 7}, {8 \atop 9}, {10 \atop 11} \right\},$$
or
$$\{0, 2^\dagger, 4^\dagger,5,7^\dagger,9^\dagger,11^\dagger\},$$
where each of the $\dagger$'s is either a $\natural$ ($+0$) or a $\flat$ ($-1$). 
In particular, the most ``sharp'' of these 32 scales is precisely the major scale.  The intersection of the maximal $\Gamma$-orbit and the maximal $\Gamma_{-}$-orbit consists of the 16 scales
$$\left\{0, {1 \atop 2}, {3 \atop 4}, 5, 7, {8 \atop 9}, {10 \atop 11} \right\}.$$

%Theorem \ref{mainconjecture} is equivalent to the following.

%\begin{theorem}
%For all $t \in [-1,1]$, the subgroups $G$ of $S_{11}$ with the largest possible $(7,t)$-musicality are the maximal twelve tone groups of signature $(2,2,2,1,2,2)$ with or without crossings.
%\end{theorem}

We may also consider the compositum ${\Gamma_1} = \Gamma \Gamma_{-}$ of $\Gamma$ and $\Gamma_{-}$, which is the maximal twelve tone group of signature $(2,2,3,2,2)$.   There are:
\begin{enumerate}
\item $1$ orbit of size $48$,
\item $4$ orbits of size $24$,
\item $12$ orbits of size $12$,
\item $4$ orbits of size $8$,
\item $12$ orbits of size $6$,
\item $6$ orbits of size $4$,
\item $6$ orbits of size $3$,
\item $12$ orbits of size $2$,
\item $4$ orbits of size $1$,
\end{enumerate}
for a total of $61$ orbits.   The maximal $\Gamma_1$-orbit is just the union of the maximal $\Gamma$-orbit and the maximal $\Gamma_{-}$-orbit.  Consistent with inclusion-exclusion, one has $48 = 32+32-16$.  One has
%$$\orb_1(\Gamma_1,\TT_7) =   \frac{7226}{462} \approx 15.6407$$
$$\diam_1(\Gamma_1,\TT_7) = 48 \cdot \frac{462}{7226} \approx 3.0689$$
and
%$$\orb_{-1}(\Gamma_1,\TT_7) = \frac{462}{61} \approx 7.5738$$
$$\diam_{-1}(\Gamma_1,\TT_7) = 48 \cdot \frac{ 61}{462} \approx 6.3377.$$
Note that
$$\frac{\diam_1(\Gamma,\TT_7)}{\diam_1(\Gamma_1,\TT_7)} = \frac{32/4194}{48/7226}   \approx \frac{3.5250}{3.0689} \approx 1.1486$$
and
$$\frac{\diam_{-1}(\Gamma,\TT_7)}{\diam_{-1}(\Gamma_1,\TT_7)} = \frac{32 \cdot 96}{48 \cdot 61} \approx \frac{6.6494}{6.3377}  \approx   1.0492.$$

Let us now consider the maximal twelve tone group $\Delta$ of signature $(4,2,1,4)$.  This group is also of theoretical and historical importance in Indian classical  music, as its unique maximal orbit consists precisely of the $72 = 1 \cdot 6 \cdot 2 \cdot 1\cdot 6$ m\={e}\d{l}akarta ragas, which are built as follows:
$$\{0\} \cup \{\mbox{two of } 1,2,3,4\} \cup  \{\mbox{one of } 5,6\} \cup \{7\} \cup  \{\mbox{two of } 8,9,10,11\}.$$
Here we have:
\begin{enumerate}
\item $1$ orbit of size $72$,
\item $2$ orbits of size $48$,
\item $1$ orbit of size $36$,
\item $2$ orbits of size $32$,
\item $4$ orbits of size $24$,
\item $3$ orbits of size $16$,
\item $2$ orbits of size $8$,
\item $2$ orbits of size $6$,
\item $4$ orbits of size $4$,
\item $2$ orbits of size $2$,
\item $2$ orbits of size $1$,
\end{enumerate}
for a total of $25$ orbits.  We then have
%$$\orb_1(\Delta,\TT_7)  = \frac{16482}{462} \approx 35.6753$$
$$\diam_1(\Delta,\TT_7) = 72 \cdot \frac{462}{16482} \approx 2.0182$$
and
%$$\orb_{-1}(\Delta,\TT_7)  = \frac{462}{25} = 18.48$$
$$\diam_{-1}(\Delta_1,\TT_7) = 72 \cdot \frac{25}{462} \approx 3.8961$$

Note that, of the 72 m\={e}\d{l}akarta ragas comprising the maximal $\Delta$-orbit, 32 are scales in the maximal $\Gamma_1$-orbit consisting of 48 scales, but only 16 are scales in the maximal $\Gamma_{-}$-orbit consisting of 32 scales.

One may also consider the maximal twelve tone group $\Delta_{-}$ of signature $(4,1,2,4)$, an action that globally fixes $5$ instead of $7$, as does $\Delta$.  Since changing the order of the numbers in the signature does not affect any of the relevant values, the values above calculated for $\Delta_{-}$ are the same as for $\Delta$.  The largest $\Delta_{-}$-orbit of $\TT_7$ contains the 72 scales
$$\{0\} \cup \{\mbox{two of } 1,2,3,4\} \cup  \{5\} \cup   \{\mbox{one of } 6,7\} \cup \{\mbox{two of } 8,9,10,11\}.$$   The intersection of the maximal $\Delta$-orbit and the maximal $\Delta_{-}$-orbit consists of the 36 scales
$$\{0\} \cup \{\mbox{two of } 1,2,3,4\} \cup  \{5,7\} \cup \{\mbox{two of } 8,9,10,11\}.$$

We may also consider the maximal twelve tone group $\Delta_1 = \Gamma_{-}\Delta = \Gamma_1 \Delta$ of signature $(4,3,4)$.  Its largest orbit consists of $108 = 6 \cdot 3 \cdot 6  =72+36$ possible scales, which is the union of the  maximal $\Delta$-orbit and the maximal $\Delta_{-}$-orbit.  Consistent with inclusion-exclusion, one has $108 = 72+72-36$.  The 108 scales are built as follows:
$$\{0\} \cup \{\mbox{two of } 1,2,3,4\} \cup  \{\mbox{two of } 5,6,7\} \cup  \{\mbox{two of } 8,9,10,11\}.$$
These  108 scales include all 72 m\={e}\d{l}akarta ragas in the maximal $\Delta$-orbit and all 32 scales from the maximal $\Gamma_{-}$-orbit, along with 20 others.  Indeed, by inclusion-exclusion, the combined total of scales in the maximal  $\Delta$-orbit and in the maximal $\Gamma_{-}$-orbit is only $88 = 72+32-16$.    Similarly, the 108 scales include all 72 m\={e}\d{l}akarta ragas and all 48 scales from the maximal $\Gamma_1$-orbit, along with the same 20 ``new'' scales.  Indeed, by inclusion-exclusion, the combined total of scales in the maximal  $\Delta$-orbit and in the maximal $\Gamma_1$-orbit is $88 = 72+48-32$.   The 20 new scales are as follows:
$$\{0\} \cup \{1,2\} \cup \{5,6\} \cup \{\mbox{two of } 8,9,10,11\},$$
$$\{0\} \cup \{3,4\} \cup \{5,6\} \cup \{\mbox{two of } 8,9,10,11\},$$
$$\{0\} \cup  \{\mbox{two of } 1,2,3,4\} \cup \{5,6\} \cup \{8,9\},$$
$$\{0\} \cup  \{\mbox{two of } 1,2,3,4\} \cup \{5,6\} \cup \{10,11\}.$$
At first sight this appears to be $24 = 6 \cdot 4$ scales; however, 4 scales are repeated twice, namely, the 4 scales
$$\{0\} \cup (\{1,2\} \mbox{ or } \{3,4\}) \cup \{5,6\} \cup (\{8,9\} \mbox{ or } \{10,11\}).$$

For the group $\Delta_1$ we have:
\begin{enumerate}
\item $1$ orbit of size $108$,
\item $2$ orbits of size $72$,
\item $2$ orbits of size $48$,
\item $2$ orbits of size $24$,
\item $1$ orbit of size $16$,
\item $2$ orbits of size $12$,
\item $2$ orbits of size $6$,
\item $2$ orbits of size $4$,
\item $2$ orbits of size $3$,
\end{enumerate}
for a total of $16$ orbits.  Here we have
%$$\orb_1(\Delta_1,\TT_7) = \frac{28458}{462} \approx 61.5974$$
$$\diam_1(\Delta_1,\TT_7) = \frac{108 \cdot 462}{28458} \approx 1.7533$$
and
%$$\orb_{-1}(\Delta_1,\TT_7) = \frac{462}{16} = 28.875$$
$$\diam_{-1}(\Delta_1,\TT_7)   = \frac{108 \cdot 16}{462} \approx 3.7403.$$
Thus, we see that
$$\frac{\diam_1(\Delta,\TT_7)}{\diam_1(\Delta_1,\TT_7)} = \frac{72/16482}{108/28458} \approx \frac{2.0182}{1.7533}  \approx 1.1511  $$
and
$$\frac{\diam_{-1}(\Delta_1,\TT_7)}{\diam_{-1}1\Delta_1,\TT_7)} = \frac{72 \cdot 25}{108 \cdot 16}   \approx \frac{3.8961}{3.7403} \approx 1.0417.$$
Thus, the passing from $\Delta$ to $\Delta_1$ decreases ``musicality'' of the scales in the maximal orbit in a manner that is comparable to passing from $\Gamma$ to $\Gamma_1$.

The lattice diagram for the maximal twelve tone groups $\Gamma, \Gamma_{-}, \Gamma_1, \Delta, \Delta_{-}, \Delta_1$ that we have discussed thus far, along with the groups $\Gamma_0 = \Gamma \cap \Gamma_-$ and $\Delta_0 = \Delta \cap \Delta_-$, are as follows.
\begin{eqnarray*}
\SelectTips{cm}{11}\xymatrix{  &  {\Delta_1}  & \\
 {\Delta} \ar@{-}[ru] & {\Gamma_1} \ar@{-}[u] &  {\Delta_{-}} \ar@{-}[lu]  \\
 {\Gamma} \ar@{-}[ru] \ar@{-}[u]  &  {\Delta_0} \ar@{-}[ul] \ar@{-}[ur] & {\Gamma_{-}} \ar@{-}[lu] \ar@{-}[u] \\
 &  {\Gamma_0} \ar@{-}[ru] \ar@{-}[lu]  \ar@{-}[u] &}
\end{eqnarray*}
The values of $\diam_1(G,\TT_7)$  for these eight groups are (approximately) as follows.
\begin{eqnarray*}
\SelectTips{cm}{11}\xymatrix{  &  {1.7533}  & \\
 {2.0182} \ar@{-}[ru] & {3.0689} \ar@{-}[u] &  {2.0182} \ar@{-}[lu]  \\
 {3.5250} \ar@{-}[ru] \ar@{-}[u]  & {1.5849}  \ar@{-}[ru] \ar@{-}[lu]& {3.5250} \ar@{-}[lu] \ar@{-}[u] \\
 &  {2.7603} \ar@{-}[ru] \ar@{-}[lu]  \ar@{-}[u] &} 
\end{eqnarray*}
The signatures of these maximal twelve tone groups are as follows.
\begin{eqnarray*}
\SelectTips{cm}{11}\xymatrix{  &  {(4,3,4)}  & \\
 {(4,2,1,4)} \ar@{-}[ru] & {(2,2,3,2,2)} \ar@{-}[u] &  {(4,1,2,4)} \ar@{-}[lu]  \\
 {(2,2,2,1,2,2)} \ar@{-}[ru] \ar@{-}[u]  & {(4,1,1,1,4)}  \ar@{-}[ru] \ar@{-}[lu]& {(2,2,1,2,2,2)} \ar@{-}[lu] \ar@{-}[u] \\
 &  {(2,2,1,1,1,2,2)} \ar@{-}[ru] \ar@{-}[lu]  \ar@{-}[u] &} 
\end{eqnarray*}
The heptatonic scales for these groups are the sets
\begin{eqnarray*}
\SelectTips{cm}{11}\xymatrix{  &  {\TT_{7,\Delta_1}}  & \\
{\TT_{7,\Delta}}\ar@{-}[ru] & {\TT_{7,\Gamma_1}} \ar@{-}[u] &  {\TT_{7,\Delta_{-}}}\ar@{-}[lu]  \\
{\TT_{7,\Gamma}} \ar@{-}[ru] \ar@{-}[u]  &   {\TT_{7,\Delta_0}} = {\TT_{7,\Delta_1}} \ar@{-}[lu] \ar@{-}[ru] & {\TT_{7,\Gamma_{-}}} \ar@{-}[lu] \ar@{-}[u] \\
 &  {\TT_{7,\Gamma_0}} = {\TT_{7,\Gamma_1}} \ar@{-}[ru] \ar@{-}[lu]  \ar@{-}[u] &} 
\end{eqnarray*}
having cardinalities
\begin{eqnarray*}
\SelectTips{cm}{11}\xymatrix{  &  {108}  & \\
 {72} \ar@{-}[ru] & {48} \ar@{-}[u] &  {72} \ar@{-}[lu]  \\
 {32} \ar@{-}[ru] \ar@{-}[u]  & {108 = 36 \cdot 3} \ar@{-}[ru] \ar@{-}[lu] & {32} \ar@{-}[lu] \ar@{-}[u] \\
 & {48 = 16 \cdot 3} \ar@{-}[ru] \ar@{-}[lu]  \ar@{-}[u] &} 
\end{eqnarray*}
%These cardinalities, expressed as fractions of $462 = 2 \cdot 3 \cdot 7 \cdot 11$, are as follows.
%\begin{eqnarray*}
%\SelectTips{cm}{11}\xymatrix{  &  {{\frac{18}{77}} \approx 23.4\%}  & \\
% {{\frac{18}{77}} \approx 15.6\%} \ar@{-}[ru] & {\frac{8}{77} \approx 10.4\%} \ar@{-}[u] &  {{\frac{18}{77}} \approx 15.6\%} \ar@{-}[lu]  \\
% {\frac{5 \frac{1}{3}}{77} \approx 6.9\%} \ar@{-}[ru] \ar@{-}[u]  &  {{\frac{18}{77}} \approx 23.4\%} \ar@{-}[ru] \ar@{-}[lu]   & {\frac{5 \frac{1}{3}}{77} \approx 6.9\%} \ar@{-}[lu] \ar@{-}[u] \\
% & {\frac{8}{77} \approx 10.4\%}  \ar@{-}[ru] \ar@{-}[lu]  \ar@{-}[u] &} 
%\end{eqnarray*}
%These numbers represent the probability that a randomly selected scale in $\TT_7$ lies in the set $%\TT_{7,G}$ for each of the eight twelve tone groups $G$.
It is interesting that ${\TT_{7,\Delta_0}} = {\TT_{7,\Delta_1}}$ and ${\TT_{7,\Gamma_0}} = {\TT_{7,\Gamma_1}}$, even though  ${\TT_{7,\Delta_0}}$ and ${\TT_{7,\Gamma_0}}$ each consist of three equal-sized orbits (of size 36 and 16, respectively) while  ${\TT_{7,\Delta_1}}$ and ${\TT_{7,\Gamma_1}}$ each consist of a unique orbit (of size 108 and 48, respectively). 

%and
%\begin{eqnarray*}
%\SelectTips{cm}{11}\xymatrix{  &  {3.7403}  & \\
% {3.8961} \ar@{-}[ru] & {6.3377} \ar@{-}[u] &  {3.8961} \ar@{-}[lu]  \\
% {6.6494} \ar@{-}[ru] \ar@{-}[u]  & {2.6494}  \ar@{-}[ru] \ar@{-}[lu]& {6.6494} \ar@{-}[lu] \ar@{-}[u] \\
% &  {4.5368} \ar@{-}[ru] \ar@{-}[lu]  \ar@{-}[u] &} 
%\end{eqnarray*}
%As fractions of  the largest possible value for $G = \Gamma$, these numbers are, respectively, as follows.
%\begin{eqnarray*}
%\SelectTips{cm}{11}\xymatrix{  &  {.4974}  & \\
% {.5725} \ar@{-}[ru] & {.8706} \ar@{-}[u] &  {.5725} \ar@{-}[lu]  \\
% {1} \ar@{-}[ru] \ar@{-}[u]  & {.4496}  \ar@{-}[ru] \ar@{-}[lu]& {1} \ar@{-}[lu] \ar@{-}[u] \\
% &  {.7831} \ar@{-}[ru] \ar@{-}[lu]  \ar@{-}[u] &} 
%\end{eqnarray*}
%and
%\begin{eqnarray*}
%\SelectTips{cm}{11}\xymatrix{  &  {.5625}  & \\
 %{.5859} \ar@{-}[ru] & {.9531} \ar@{-}[u] &  {.5859} \ar@{-}[lu]  \\
 %{1} \ar@{-}[ru] \ar@{-}[u]  & {.3984}  \ar@{-}[ru] \ar@{-}[lu]& {1} \ar@{-}[lu] \ar@{-}[u] \\
% &  {.6823} \ar@{-}[ru] \ar@{-}[lu]  \ar@{-}[u] &} 
%\end{eqnarray*}

While the analysis in this section provides some reasons for using the groups $\Gamma$ and $\Delta$, Theorem  \ref{mainconjecture} shows that $\Gamma$ is unique only up to conjugacy.  One ought to ask whether or not $\Gamma$ is the ``best'' choice among all of its conjugates $G$ for the scales in the maximal $G$-orbit. 

We believe that the choice of $\Gamma$ and $\Delta$, and thus the m\={e}\d{l}akarta raga system, can be justified.  All 72 of the m\={e}\d{l}akarta ragas contain $0$ and $7$, which is a natural restriction to impose as the interval $\{0,7\}$ is a {\bf fifth} (in fact, a {\it perfect fifth} in Indian classical tuning).  The $72 = 1 \cdot 6 \cdot 2 \cdot 1\cdot 6$ m\={e}\d{l}akarta ragas are obtained as follows:
$$\{0\} \cup \{\mbox{two of } 1,2,3,4\} \cup  \{\mbox{one of } 5,6\} \cup \{7\} \cup  \{\mbox{two of } 8,9,10,11\}.$$
These 72 scales comprise the maximal $\Delta$-orbit of $\TT_7$, where $\Delta$ is the subgroup of $S_{11}$ isomorphic to $S_4 \times S_2 \times S_1 \times S_4$ that acts separately on $\{1,2,3,4\}$, $\{5,6\}$, $\{7\}$, and $\{8,9,10,11\}$.  In other words, the 72 m\={e}\d{l}akarta ragas are precisely the scales with largest $(\Delta,t)$-musciality for any $t \in [-\infty,\infty]$.  The choice of $\Gamma$ and $\Delta$ among their conjugates are ``natural'' choices at the very least to the extent that the perfect fifth is ``natural.''
 
For any subgroup $G$ of $S_{11}$ conjugate to $\Gamma$, one is required to choose an element of $\ZZ_{12}$ besides $0$ that is globally fixed by the action of $G$.   To the extent that the perfect fifth is ``natural,'' the most natural choice is the element $7$, but one may rightfully choose  $5$ instead.   Either of these is a natural choice since, while the interval $\{0,7\}$ is a fifth, the interval $\{0,5\}$ is a {\bf fourth}, and both intervals coincide with perfect harmonic intervals (in some tunings).  Moreover, 5 and 7 are the only elements of the cyclic group $\ZZ_{12}$ other than $1$ and $-1 = 11$  that generate the whole group, a fact that forms the basis of the {\it circle of fifths} and {\it circle of fourths}.

Once the choice of a second fixed element of $\ZZ_{12}$ is made, one is required to partition the remaining ten elements of $\ZZ_{12}$ into disjoint two-element subsets $\{a_i,b_i\}$ for $i = 1,2,3,4,5$, where $G$ is then to act  separately on $\{a_i,b_i\}$ for all $i$.  A natural choice is for each $a_i$ and $b_i$ to be {\it consecutive}, so that $b_i = a_i \pm 1$ for all $i$.
This is because any other choice would require the action to be ``non-local,'' with the occurence of ``crossings,''  as, for example, if $G$ were to act separately on $\{1,4\}$ and $\{2,3\}$.  It is clear that every subgroup of $S_n$ is conjugate to a subgroup that acts without crossings.  There are exactly six conjugates of $\Gamma$ that act without crossings, namely, those that fix $1$, $3$, $5$, $7$, $9$, or $11$, respectively.  (By definition, all of them fix $0$.)  Among these six conjugates of $\Gamma$, the only one besides $\Gamma$ that it may also be natural to consider is the subgroup $\Gamma_-$ generated by $\{(1 \ 2),(3 \ 4),(6 \ 7),(8 \ 9),(10 \ 11)\}$, which fixes $5$ instead of $7$.  From this group $\Gamma_-$ we obtain the following  32 scales in the unique maximal $\Gamma_-$-orbit: 
$$\left\{\mbox{C}, {\mbox{D}\flat \atop \mbox{D}}, {\mbox{E}\flat \atop \mbox{E}}, \mbox{F}, {\mbox{G}\flat \atop \mbox{G}}, {\mbox{A}\flat \atop \mbox{A}}, {\mbox{B}\flat \atop \mbox{B}}\right\}.$$
Half of these 32 scales---those that contain G---were obtained previously using $\Gamma$, so among these 32 scales we obtain the 16 additional scales listed in Table 3, namely, those that contain G$\flat$ rather than G.
%$$\left\{\mbox{C}, {\mbox{D}\flat \atop \mbox{D}}, {\mbox{E}\flat \atop \mbox{E}}, \mbox{F}, \mbox{G}\flat, {\mbox{A}\flat \atop \mbox{A}}, {\mbox{B}\flat \atop \mbox{B}}\right\}.$$
Thus, the union of the maximal $\Gamma$-orbit and the maximal $\Gamma_-$-orbit consists of $48 = 32+16 = 32+32-16$ scales. 

% The most ``symmetric'' of all of these 48 scales are the 16 scales that contain both F and G, that is, that lie in the intersection of the maximal $\Gamma$-orbit and the maximal $\Gamma_-$-orbit, namely: $$\left\{\mbox{C}, {\mbox{D}\flat \atop \mbox{D}}, {\mbox{E}\flat \atop \mbox{E}}, \mbox{F}, \mbox{G}, {\mbox{A}\flat \atop \mbox{A}}, {\mbox{B}\flat \atop \mbox{B}}\right\}.$$

It is natural also to consider the compositum $\Gamma_1 = \Gamma \Gamma_-$ of $\Gamma$ and $\Gamma_-$, which is isomorphic to $S_2 \times S_2 \times S_3 \times S_2 \times S_2$ and acts without crossings, separately on $\{1,2\}$, $\{3,4\}$, $\{5,6,7\}$, $\{8,9\}$, and $\{10,11\}$.   The unique maximal $\Gamma_1$-orbit consists of the 48 scales listed in Tables 1 and 2 comprising the union of the maximal $\Gamma$-orbit and the maximal $\Gamma_-$-orbit.  The group $\Gamma_1$ has the virtue that its action on $\{1,2,3,\ldots,11\}$ is completely symmetrical, providing a natural theory of heptatonic scales that privileges both the fourth and the fifth equally.  Moreover, the group $\Gamma_1$ ranks 3rd--8th among the 3094 conjugacy classes of subgroups $G$ of $S_{11}$ for its value of $\diam(G,\TT_7)$ for $t = 1$ and 2nd--7th for its values for $t = 0$ and $t = -1$.   One has
$$\diam_1(\Gamma_1, \TT_7) \approx 3.0689,$$
$$\diam_0(\Gamma_1, \TT_7) \approx  4.3060,$$  $$\diam_{-1}(\Gamma_1, \TT_7) \approx  6.3377,$$
which closely rival the values for $\Gamma$ and $\Gamma_-$.
By contrast, the group $\Delta$ fares relatively poorly, ranking 529th--536th among the 3094 conjugacy classes for its value for $t = 1$, ranking 483rd--490th for its value for $t = 0$, and ranking 294th--301st for its value at $t = -1$.   These values are as follows:
$$\diam_1(\Delta, \TT_7) \approx 2.0182,$$
$$\diam_0(\Delta, \TT_7) \approx  2.5229,$$
$$\diam_{-1}(\Delta, \TT_7) \approx  3.8961.$$
By this measure, then, the 48 scales in the maximal $\Gamma_1$-orbit are a worthy alternative to the 72 (m\={e}\d{l}akarta) scales in the maximal $\Delta$-orbit.

\begin{table}
\caption{16 additional scales  in the maximal $\Gamma_1$-orbit of $\TT_7$}
\centering 
\begin{tabular}{l|lllllll} \hline
 & C & D & E & F & G$\flat$ & A & B \\ \hline
 & C & D & E & F & G$\flat$ & A & B$\flat$ \\ \hline
 & C & D & E & F & G$\flat$ & A$\flat$ & B \\ \hline
major Locrian & C & D & E & F & G$\flat$ & A$\flat$ & B$\flat$ \\ \hline
 & C & D & E$\flat$ & F & G$\flat$ & A & B \\ \hline
 & C & D & E$\flat$ & F & G$\flat$ & A & B$\flat$ \\ \hline
 & C & D & E$\flat$ & F & G$\flat$ & A$\flat$ & B \\ \hline
 half diminished & C & D & E$\flat$ & F & G$\flat$ & A$\flat$ & B$\flat$ \\ \hline
 & C & D$\flat$ & E & F & G$\flat$ & A & B \\ \hline
 & C & D$\flat$ & E & F & G$\flat$ & A & B$\flat$ \\ \hline
Persian & C & D$\flat$ & E & F & G$\flat$ & A$\flat$ & B \\ \hline
 & C & D$\flat$ & E & F & G$\flat$ & A$\flat$ & B$\flat$ \\ \hline
 & C & D$\flat$ & E$\flat$ & F & G$\flat$ & A & B \\ \hline
Locrian 6 & C & D$\flat$ & E$\flat$ & F & G$\flat$ & A & B$\flat$ \\ \hline 
 & C & D$\flat$ & E$\flat$ & F & G$\flat$ & A$\flat$ & B \\ \hline
Locrian mode & C & D$\flat$ & E$\flat$ & F & G$\flat$ & A$\flat$ & B$\flat$ \\ \hline
\end{tabular}
\end{table}

It must be noted that the maximal twelve tone groups that are conjugate to $\Gamma$, besides $\Gamma_{-}$, are those of signature $(1,2,2,2,2,2)$, $(2,1,2,2,2,2)$, $(2,2,2,2,1,2)$, and $(2,2,2,2,2,1)$, respectively.  These four groups are those that fix $1$, $3$, $9$, and $11$, respectively, instead of $7$ or $5$.  These six groups appear in three ``inverse'' pairs: $\Gamma$ and $\Gamma_{-}$ are inversions, as are those of signature $(1,2,2,2,2,2)$ and $(2,2,2,2,2,1)$, as are those of signature $(2,1,2,2,2,2)$ and $(2,2,2,2,1,2)$.   At this stage, one ought to seek further {\it mathematical} justification for the choice of fixing $7$ or $5$ over and above $1$, $3$, $9$, or $11$, based on more than just the naturality of fixing the perfect fifth ($0$ and) $7$.  Regarding this problem, one of the two reviewers wrote the following.
\begin{quote}
The proposed formal framework provides no actual explanation for the privileged role of $7$ and $5$ for the choice of the second fixed element.  In my view, this is a methodological weakness. Although the authors try to explain it as a ``most natural choice'' but all the reasoning is based on aspects that are external to the actual formal framework: tuning and generators of $\ZZ_{12}$.  If tuning was crucial, why does $4$ play no role in the model? If generators were important, why do $1$ or $1$1 lead to unmusical systems? To solve the issue, I believe, the model should be enhanced by an additional formal constraint resulting in disqualification of other choices for the second fixed element. Below, I theorize about one possible approach (evenness)$\ldots$.

As I challenged above, the choice of the other fixed element as 7 or 5 is based on ad hoc arguments. It would be much more elegant if an additional formal concept was introduced from which the two choices of 7 and 5 would formally follow. I think that some generalization of Clough’s concept of evenness could be a viable option$\ldots$. I think that some measure of ``average evenness'' for systems of scales could be introduced (and computed) and it would disqualify the other choices of the fixed element in the heptatonic scales. Additionally, evenness applies obviously even on the level of particular scales. This would provide a natural ordering of scales with diatonic scales (and so the anhemitonic pentatonic scales) being maximally even.
\end{quote}
No doubt this is a promising way to resolve the issue.  As mentioned in the introduction, it is possible that the methods of this paper can be combined synergistically with other ways of mathematically justifying the various musical scales.  We leave this to the interested reader to pursue further.

The reviewers also commented that one ought to try to generalize Theorem \ref{mainconjecture} to atonic scales, that is, to the action of $S_{12}$ on the set $\SS_7$ of all 7-note atonic scales in $\SS$.
Based on that suggestion, we used GAP and SAGE to verify the following theorem, in a manner similar to the proof of Theorem \ref{mainconjecture}.  Note that there are 10723 conjugacy classes of  subgroups of $S_{12}$.

\begin{theorem}[with James Allen]\label{mainconjecture5}
For all $t \in [-1,1]$, the subgroups $G$ of $S_{12}$ for which $\diam_t(G,\SS_7)$ is largest are the group generated by $\{(0\ 1), (2 \ 3),(4 \ 5),(7 \ 8),(9 \ 10)(6 \ 11), (9 \ 11) (6 \ 10)\}$,
%$\{(2 \ 3), (4, \ 5), (6 \ 7), (8 \ 9)(10 \ 11),(8 \ 10)(9 \ 11) \}$,
along with its conjugate subgroups.   Moreover, for all $t \in [0,1]$, the subgroups  $G$ of $S_{12}$ for which $\diam_t(G,\SS_7)$ is second largest are the group $\Gamma$ generated by  $\{(1 \ 2),(3 \ 4),(5 \ 6),(8 \ 9),(10 \ 11)\}$, along with its conjugate subgroups.  
\end{theorem}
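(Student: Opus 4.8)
The plan is to establish both claims by the same exhaustive computation used for Theorem~\ref{mainconjecture}, now carried out over $S_{12}$ acting on the $792$ seven-element subsets of $\ZZ_{12}$. The first reduction is that $\diam_t(H,\SS_7)$ depends only on the conjugacy class of $H$ in $S_{12}$: conjugation by any $g$ carries the $H$-orbits of $\SS_7$ bijectively onto the $gHg^{-1}$-orbits of the same sizes, and $\diam_t$ is a function of the orbit-size multiset alone. Hence it suffices to run over one representative $G$ from each of the $10723$ conjugacy classes. For each such $G$, the only datum needed is the multiset $\{m_1,\dots,m_r\}$ of $G$-orbit sizes on $\SS_7$, from which
$$\orb_t(G,\SS_7) = \left(\frac{1}{792}\sum_{i=1}^r m_i^{\,t+1}\right)^{1/t}, \qquad \diam_t(G,\SS_7) = \frac{\max_i m_i}{\orb_t(G,\SS_7)}$$
determine the relevant one-variable functions of $t$.

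The computational pipeline is then: use GAP to produce generators for a representative of every conjugacy class; let each $G$ act on the $792$ subsets and record only its orbit-size multiset; and form the function $\diam_t(G,\SS_7)$ symbolically in $t$. I would single out the group $G_0$ generated by $\{(0\ 1),(2\ 3),(4\ 5),(7\ 8),(9\ 10)(6\ 11),(9\ 11)(6\ 10)\}$ as the conjectured winner and $\Gamma$ as the conjectured runner-up, compute their multisets explicitly, and compare them against all competitors.

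The genuinely delicate point is that the domination must hold for \textit{every} $t$ in an interval, not merely at sample points. For each competitor class $C$ I would form the gap $\delta_C(t)=\diam_t(G_0,\SS_7)-\diam_t(C,\SS_7)$, which is an elementary real-analytic function of $t$ (built from the finitely many exponential functions $t\mapsto m_i^{\,t+1}$ and the smooth power-mean operation, including at $t=0$); hence it is either identically zero or has only finitely many zeros. To certify $\delta_C(t)>0$ throughout $[-1,1]$ I would evaluate at the endpoints, locate the finitely many sign-changes of $\delta_C'$, and bound $\delta_C$ below by interval arithmetic, thereby upgrading the ``plot-and-verify'' argument to a rigorous positivity certificate. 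Since $\diam_t$ is determined by the orbit-size multiset, this also shows that no class other than that of $G_0$ realizes the dominating function, giving uniqueness up to conjugacy; after deleting the class of $G_0$, the identical procedure restricted to $[0,1]$ isolates $\Gamma$ as the unique second-largest.

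\textbf{Main obstacle.} Beyond the sheer scale of the enumeration ($10723$ classes, each an orbit computation on $792$ subsets) and the need to certify continuous rather than pointwise inequalities, the conceptual hurdle is that the winner is \textit{not} a maximal twelve tone group: on the block $\{6,9,10,11\}$ the generators $(9\ 10)(6\ 11)$ and $(9\ 11)(6\ 10)$ generate the Klein four-group acting regularly, not the full $S_4$, while the remaining transpositions only pair $\{0,1\},\{2,3\},\{4,5\},\{7,8\}$. Thus one cannot restrict to the $56$ signature types of the tonic theory; the optimum genuinely lives in the full subgroup lattice of $S_{12}$, and the proof must search it exhaustively rather than reduce to a tractable family of products of symmetric groups.
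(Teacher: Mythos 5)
Your proposal is correct and is essentially the paper's own argument: the paper proves this theorem exactly as it proves Theorem~\ref{mainconjecture}, by reducing $\diam_t$ to the orbit-size multiset (so only the $10723$ conjugacy classes of subgroups of $S_{12}$ matter), computing representatives and their orbits on the $792$ scales in $\SS_7$ with GAP and SAGE, and verifying that the stated groups dominate on the relevant $t$-intervals. Your additional step of certifying the inequalities on all of $[-1,1]$ (rather than the paper's plot-and-verify) and your observation that the winner is not a maximal twelve tone group are sensible refinements, but they do not change the method.
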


The theorem provides further justification that the group $\Gamma$ and its conjugates are ``natural'' choices for a theory of musical scales.  On the interval $[-1,0]$, as $t$ approaches $-1$, one other conjugacy class begins to surpass the group $\Gamma$ in $t$-power diameter, namely, the conjugates of the group generated by
$\{(0\ 1), (2 \ 3),(4 \ 5),(7 \ 8)(9 \ 10),(9 \ 10)(6 \ 11), (9 \ 11) (6 \ 10)\}$.    The values of $\diam_t(G,\TT_7)$ for $t = 1,0,-1$ for these three conjugacy classes of subgroups $G$ of $S_{12}$ are provided in Table 4.  For reasons explained earlier, the values on the interval $[0,1]$ are more critical than those on the interval $[-1,0]$.

\begin{table} 
\caption{The three conjugacy classes of subgroups of $S_{12}$ with largest $t$-power diameter}
\centering 
{\begin{tabular}{l|l|l|l|l}
Signature &  Maximal orbits  & $\diam_1(G,\TT_7)$ & $\diam_0(G,\TT_7)$ &  $\diam_{-1}(G,\TT_7)$  \\\hline \hline
$(4, 2, 2, 2, 2)$ & 1 of size 64  & 3.9501 & 5.5199 & 7.8384  \\ \hline
$(2, 2, 2, 2, 2, 1, 1)$ & 1 of size 32  & 3.7917 & 5.0929 & 6.9091 \\ \hline
$(4, 2, 2, 2, 2)$ & 1 of size 64 & 3.7183 & 5.0397 & 7.0303 \\ \hline
\end{tabular}}
\end{table}

%%This paper develops the theory of the $t$-power mean orbit size of a finite group action and explores the merits of using this theory to study the ``musicality'' of musical scales.

\section{Hexatonic scales}

It is most common to obtain a hexatonic scale in one of the following three ways: (1) deleting a note from a given heptatonic scale (e.g., the major and minor hexatonic scales are obtained from the major and natural minor heptatonic scales by deleting the seventh note and the sixth note, respectively); (2) adding a note to a given pentatonic scale (e.g, the major and minor blues hexatonic scales are obtained from the major and minor pentatonic scales by adding an extra half step after the third note in each); and (3) combining three non-overlapping triads.  

A fourth way of obtaining a hexatonic scale from a heptatonic scale is as follows.  Define the {\bf (tonic) complement} $\overline{s}$ of a $k$-tonic scale $s$ to be the $(13-k)$-tonic scale $$\overline{s} = (\ZZ_{12}-s)\cup\{0\}.$$
Of course one has $\overline{\overline{s}} = s$ for all scales $s \in \TT$.  For any action $\cdot$ of $S_{11}$ on $\TT$, there is a {\bf (tonic) complementary action} $\overline{\cdot}$ of $S_{11}$  defined by
$$\sigma \overline{\cdot} s = \overline{\sigma \cdot \overline{s}}$$
for all $\sigma \in S_{11}$ and all $s \in \TT$.  Moreover, the induced action $\cdot$ on $\TT_k$ corresponds to the induced action $\overline{\cdot}$ on $\TT_{13-k}$.  As a consequence, our results on actions of the subgroups of $S_{11}$ on $\TT_7$ yield corresponding results on the actions of the subgroups of $S_{11}$ on $\TT_6$.   Thus, by Theorem \ref{mainconjecture} and complementarity, we have the following.

\begin{theorem}\label{mainconjecture2}
For all $t \in [-1,1]$, the subgroups $G$ of $S_{11}$ for which $\diam_t(G,\TT_6)$ is largest are the group $\Gamma$ generated by $\{(1 \ 2),(3 \ 4),(5 \ 6),(8 \ 9),(10 \ 11)\}$, along with its conjugate subgroups.  
\end{theorem}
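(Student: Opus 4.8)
The plan is to deduce the statement directly from Theorem \ref{mainconjecture} by showing that the tonic complement furnishes an isomorphism of $G$-sets $\TT_7 \cong \TT_6$ for every subgroup $G$ of $S_{11}$, so that the two diameter functions coincide identically. Once that is in hand, nothing about the explicit orbit structure of $\TT_6$ need be recomputed.

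First I would record the equivariance of the complement map $c \colon \TT \to \TT$, $c(s) = \overline{s} = (\ZZ_{12} - s) \cup \{0\}$, with respect to the standard action. Because every $\sigma \in S_{11}$ fixes $0$ and so extends to a bijection of all of $\ZZ_{12}$, one has $\sigma(\ZZ_{12} - s) = \ZZ_{12} - \sigma(s)$, whence $c(\sigma \cdot s) = (\ZZ_{12} - \sigma(s)) \cup \{0\} = \sigma(\overline{s}) = \sigma \cdot c(s)$ for all $\sigma \in S_{11}$ and all $s \in \TT$. In the language of the paragraph preceding the theorem, combining this identity with $c^2 = \id$ shows that, for the action of $S_{11}$, the complementary action $\overline{\cdot}$ agrees with the ordinary action $\cdot$, so the two need not be distinguished here.

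Next, since $|\overline{s}| = 13 - |s|$, the involution $c$ restricts to a bijection $\TT_7 \to \TT_6$. Together with the equivariance just established, this makes $c$ an isomorphism of $G$-sets $(\TT_7,\cdot) \cong (\TT_6,\cdot)$ for every subgroup $G \le S_{11}$. Isomorphic $G$-sets carry the same multiset of orbit sizes, and $\diam_t(G,S) = \orb_\infty(G,S)/\orb_t(G,S)$ depends on $S$ only through that multiset: each orbit $O$ contributes $|O|$ equal values $|O|$ to the power mean $\orb_t$, while $\orb_\infty$ is the largest orbit size. Hence $\diam_t(G,\TT_6) = \diam_t(G,\TT_7)$ for every extended real $t$ and every subgroup $G$.

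Finally, since these two functions of $(G,t)$ agree identically, the subgroups maximizing $\diam_t(G,\TT_6)$ over $t \in [-1,1]$ are precisely those maximizing $\diam_t(G,\TT_7)$, which by Theorem \ref{mainconjecture} are $\Gamma$ together with its conjugates. I expect no genuine obstacle: the single point needing verification is the commutation $c(\sigma \cdot s) = \sigma \cdot c(s)$, which falls out immediately from $\sigma(0)=0$, and the remainder is the formal observation that $\diam_t$ is an isomorphism invariant of a finite $G$-set, combined with the cited heptatonic result.
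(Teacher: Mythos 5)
Your proof is correct and is essentially the paper's own argument: the paper likewise deduces the hexatonic theorem from Theorem \ref{mainconjecture} via the tonic complement $s \mapsto \overline{s}$, which carries the $G$-set $\TT_7$ isomorphically onto $\TT_6$ and hence preserves the multiset of orbit sizes and the value of $\diam_t$. Your writeup simply makes explicit the equivariance $c(\sigma \cdot s) = \sigma \cdot c(s)$ (equivalently, that the complementary action coincides with the standard one), which the paper leaves implicit in the phrase ``by complementarity.''
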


Also by complementarity, each of the theories of heptatonic scales developed in Section 5 has a complementary theory of hexatonic scales: the sets of complements of the scales in the maximal orbits of a given action on $\TT_7$ are precisely the maximal orbits of the complementary action on $\TT_6$.  However, none of the 32 scales in the maximal orbit of $\TT_6$ under the action of our twelve tone group $\Gamma$ of signature $(2,2,2,1,2,2)$ contains the interval $\{0,7\}$ of a fifth.  Consequently, this particular group is perhaps not the most natural for yielding interesting hexatonic scales.  For this purpose we single out the subgroup $\Lambda$ of $S_{11}$ generated by the set $\{(2 \ 3),(4 \ 5),(6 \ 7),(8 \ 9),(10 \ 11)\}$, which is the maximal twelve tone group of signature $(1,2,2,2,2,2)$, and the subgroup $\Lambda'$ of $S_{11}$ generated by the set $\{(1 \ 2),(3 \ 4),(5 \ 6),(7 \ 8),(9 \ 10)\}$, which is the maximal twelve tone group of signature $(2,2,2,2,2,1)$.  Among the 32 scales 
$$\left\{\mbox{C}, {\mbox{D} \atop \mbox{E}\flat}, {\mbox{E} \atop \mbox{F}}, {\mbox{G} \atop \mbox{G}\flat}, {\mbox{A} \atop \mbox{A}\flat}, {\mbox{B} \atop \mbox{B}\flat}\right\}$$
in the maximal $\Lambda$-orbit of $\TT_6$ appear the whole tone scale $\{\mbox{C, D, E, G$\flat$, A$\flat$, B$\flat$}\}$ (the complement of Neopolitan major), the Prometheus scale $\{\mbox{C, D, E, G$\flat$, A, B$\flat$}\}$ (the complement of Neopolitan minor), and the augmented scale $\{\mbox{C, E$\flat$, E, G, A$\flat$, B}\}$.  Among the 32 scales 
$$\left\{\mbox{C}, {\mbox{D} \atop \mbox{D}\flat}, {\mbox{E} \atop \mbox{E}\flat}, {\mbox{F} \atop \mbox{G}\flat}, {\mbox{G} \atop \mbox{A}\flat}, {\mbox{A} \atop \mbox{B}\flat}\right\}$$ in the maximal $\Lambda'$-orbit of $\TT_6$ appear the whole tone scale, the major hexatonic scale $\{\mbox{C, D, E, F, G, A}\}$, the minor hexatonic scale $\{\mbox{C, D, E$\flat$, F, G, B$\flat$}\}$, and the tritone scale $\{\mbox{C, D$\flat$, E, G$\flat$, G, B$\flat$}\}$.   The whole tone scale is the only scale that lies in both sets of 32 scales.   Unfortunately,  the major and minor blues hexatonic scales do not appear in either set but rather have $\Lambda$-orbits and $\Lambda'$-orbits  of size $16$.

\section{Pentatonic scales}

The most prominent of the pentatonic scales are the five {\bf black-key pentatonic scales}  formed by the black keys of a piano: the major and minor and blues major and minor pentatonic scales and the Egyptian, or suspended, pentatonic scale.  Let $\Sigma$ denote the  subgroup of $S_{11}$ generated by  $\{(2 \ 3),(4 \ 5),(7 \ 8),(9 \ 10)\}$.  In other words, $\Sigma$ is the maximal twelve tone group of signature $(1,2,2,1,2,2,1)$.  The 16 scales in the unique maximal $\Sigma$-orbit of $\TT_5$ are the 16 scales 
$$\left\{\mbox{C}, {\mbox{D} \atop \mbox{D} \sharp}, {\mbox{E} \atop \mbox{F}}, {\mbox{G} \atop \mbox{G}\sharp}, {\mbox{A} \atop \mbox{A}\sharp}\right\}$$
listed in Table 5.  Among these 16 scales are the five black-key pentatonic scales. 

\begin{table}
\caption{16 pentatonic scales in the maximal $\Sigma$-orbit of $\TT_5$}
\centering 
\begin{tabular}{l|lllll} \hline
major  & C & D & E &   G & A \\ \hline
 & C & D & E &  G & A$\sharp$ \\ \hline
 & C & D & E &   G$\sharp$ & A \\ \hline
 & C & D & E &   G$\sharp$ & A$\sharp$ \\ \hline
Blues major, or Ritsusen, or yo & C & D & F &   G & A \\ \hline
Egyptian, or suspended  & C & D & F &   G & A$\sharp$ \\ \hline
 & C & D & F &   G$\sharp$ & A \\ \hline
& C & D & F &  G$\sharp$ & A$\sharp$ \\ \hline
 & C & D$\sharp$ & E &   G & A \\ \hline
 & C & D$\sharp$ & E &  G & A$\sharp$ \\ \hline
& C &  D$\sharp$ & E &  G$\sharp$ & A \\ \hline
 & C & D$\sharp$ & E &  G$\sharp$ & A$\sharp$ \\ \hline
 & C & D$\sharp$ & F &   G & A \\ \hline
minor & C & D$\sharp$ & F &  G & A$\sharp$ \\ \hline 
 & C & D$\sharp$ & F &  G$\sharp$ & A \\ \hline
Blues minor, or Man Gong & C & D$\sharp$ & F &   G$\sharp$ & A$\sharp$ \\ \hline
\end{tabular}
\end{table}

The following theorem was proved using GAP and SAGE in a manner similar to the proof of Theorems \ref{mainconjecture} and \ref{mainconjecture5}.

\begin{theorem}\label{mainconjecture3}
For all $t \in [-1,1]$, the subgroups $G$ of $S_{11}$ for which $\diam_t(G,\TT_5)$ is largest are the group $\Sigma_1$ generated by $\{(2 \ 3),(4 \ 5),(7 \ 8),(9 \ 10)(6 \ 11), (9 \ 11) (6 \ 10)\}$,
%$\{(2 \ 3), (4, \ 5), (6 \ 7), (8 \ 9)(10 \ 11),(8 \ 10)(9 \ 11) \}$,
along with its conjugate subgroups.   Moreover, for all $t \in [0,1]$, the subgroups  $G$ of $S_{11}$ for which $\diam_t(G,\TT_5)$ is second largest are the group $\Sigma$ generated by  $\{(2 \ 3),(4 \ 5),(7 \ 8),(9 \ 10)\}$, along with its conjugate subgroups.  
\end{theorem}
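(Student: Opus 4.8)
The plan is to follow the computational strategy already used for Theorems \ref{mainconjecture} and \ref{mainconjecture5}, while being explicit about the two points that require genuine care: the reduction to a finite comparison, and the upgrade from pointwise evaluation to an inequality valid on a whole interval of $t$.

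First I would reduce to a finite problem. As noted in the discussion following Theorem \ref{mainconjecture}, the quantity $\diam_t(H,\TT_5)$ depends only on the conjugacy class of $H$ in $S_{11}$, since conjugate subgroups have identical multisets of orbit sizes on $\TT_5$. Thus it suffices to range over the $3094$ conjugacy classes of subgroups of $S_{11}$. For each class I would choose a representative $G$ and compute, in GAP and SAGE, the multiset of $G$-orbit sizes of the $\binom{11}{4} = 330$ pentatonic scales in $\TT_5$. For the maximal twelve tone groups---in particular for $\Sigma$, of signature $(1,2,2,1,2,2,1)$---these orbit sizes are given in closed form by the $k=5$ analogue of Proposition \ref{prop:orb}, so no enumeration is needed there; but the conjectural winner $\Sigma_1$ is \textit{not} a twelve tone group, because its Klein four-group factor $\langle (9\ 10)(6\ 11),\,(9\ 11)(6\ 10)\rangle$ acts on $\{6,9,10,11\}$ with crossings, so its orbits must be enumerated directly. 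A short hand computation records that $\Sigma_1 \cong \ZZ_2^3 \times V_4$ acts separately on $\{2,3\}$, $\{4,5\}$, $\{7,8\}$, and $\{6,9,10,11\}$ while fixing $1$, and that its unique maximal orbit of $\TT_5$ arises from choosing one note from each of the three transposition blocks together with one note from the $V_4$-block.

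Second, for each class I would assemble the explicit elementary function
$$\diam_t(G,\TT_5) = \frac{\orb_\infty(G,\TT_5)}{\orb_t(G,\TT_5)},$$
which depends only on the recorded orbit-size multiset, and evaluate it at the three critical values $t = -1, 0, 1$. This identifies $\Sigma_1$ as the leader and $\Sigma$ as the runner-up at those values, in close analogy with the $S_{12}$ data of Table 4.

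The main obstacle is to promote these finitely many pointwise comparisons to the inequalities
$$\diam_t(\Sigma_1,\TT_5) > \diam_t(G,\TT_5) \qquad (t \in [-1,1])$$
for every non-conjugate $G$, together with the corresponding second-place inequality for $\Sigma$ on $[0,1]$, valid on the \textit{entire} interval rather than at sample points. Here I would use that each $\diam_t(G,\TT_5)$ is real-analytic in $t$ on $\RR$ (with a removable singularity at $t=0$, since $\orb_t$ is smooth across $0$), so that any pairwise difference is real-analytic and hence either identically zero or possessed of only isolated zeros. To certify the absence of a sign change on the compact interval, I would evaluate each difference on a sufficiently fine grid and bound its derivative---equivalently, invoke a Lipschitz estimate---so that a positive value at each grid point, exceeding the derivative bound times the grid spacing, excludes any crossing in between; the sigmoid and monotonicity properties of $\orb_t$ developed in Sections 2 and 3 make such bounds uniform and explicit. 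The second-place claim is the subtler of the two, precisely because the interval must be restricted to $[0,1]$: near $t=-1$ another conjugacy class overtakes $\Sigma$, so the certification must both detect and locate that crossing, paralleling the analogous phenomenon for $\SS_7$ recorded after Theorem \ref{mainconjecture5}.
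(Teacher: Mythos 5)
Your proposal is correct and takes essentially the same route as the paper: the paper proves this theorem by the same GAP/SAGE computation over the $3094$ conjugacy classes of subgroups of $S_{11}$ used for Theorems \ref{mainconjecture} and \ref{mainconjecture5} (reduce to conjugacy-class representatives, compute orbit-size multisets on $\TT_5$, then verify the ranking of $\diam_t(G,\TT_5)$ across $t \in [-1,1]$), with your grid-plus-Lipschitz certification being a more careful version of the paper's ``plot and verify'' step. Your side observations are also accurate and consistent with the paper: $\Sigma_1$ acts with crossings on $\{6,9,10,11\}$ so Proposition \ref{prop:orb} does not apply to it and its unique maximal orbit (of size $2\cdot 2\cdot 2\cdot 4 = 32$) must be found directly, and other conjugacy classes do overtake $\Sigma$ as $t$ approaches $-1$, which is exactly why the second-place claim is restricted to $[0,1]$.
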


On the interval $[-1,0]$, as $t$ approaches $-1$, several other subgroups begin to surpass the group $\Sigma$ in $t$-power diameter. 

The group $\Sigma_1$ of the theorem is isomorphic to $\ZZ_2^5$, and the group $\Sigma$ is a subgroup of $\Sigma_1$ isomorphic to $\ZZ_2^4$.   For the action of $\Sigma_1$ on $\TT_5$ there are:
\begin{enumerate}
\item $1$ orbit of size $32$,
\item $3$ orbits of size $16$,
\item $19$ orbits of size $8$,
\item $16$ orbits of size $4$,
\item $15$ orbits of size $2$,
\item $4$ orbits of size $1$,
\end{enumerate}
for a total of $58$ orbits.  For the action of $\Sigma$ on $\TT_5$ there are:
\begin{enumerate}
\item $1$ orbit of size $16$,
\item $12$ orbits of size $8$,
\item $20$ orbits of size $4$,
\item $40$ orbits of size $2$,
\item $18$ orbits of size $1$,
\end{enumerate}
for a total of $101$ orbits.   From this we deduce that
$$\diam_1(\Sigma_1,\TT_5) \approx  3.1731$$
$$\diam_0(\Sigma_1,\TT_5) \approx  4.2068$$
$$\diam_{-1}(\Sigma_1,\TT_5) \approx  5.6242$$
and
$$\diam_1(\Sigma,\TT_5) \approx  3.1391$$
$$\diam_0(\Sigma,\TT_5) \approx 3.9004$$
$$\diam_{-1}(\Sigma,\TT_5) \approx  4.8970.$$

One of the reviewers suggested the following theory of {\it atonic} pentatonic scales alternative to our theory of {\it tonic} pentatonic scales.
\begin{quote}
I really like the idea of complementarity. It is a very elegant way of dealing with related systems. However, from the musical perspective it seems quite counterintuitive that the hexatonic scales turn out to be the complements to the heptatonic scales. How a Neapolitan minor scale is complementary to the whole tone scale? Of course, it follows from the feature that one tone is fixed in all scales and only the others are movable.

However, if this feature is reconsidered, one might achieve an elegant explanation of the pentatonic scales while keeping the heptatonic scales in check. A scale would be any subset of $\ZZ_{12}$, not necessarily containing 0. Instead of $S_{11}$, one would consider actions of subgroups of $S_{12}$. The definition of local actions would require a cosmetic change: it would need to consider the cyclic nature of $\ZZ_{12}$.

As regards the heptatonic scales, I conjecture that one would obtain the maximal $t$-orbit diameters with the signature $(2,2,2,2,2,1,1)$. To fix $0$ one could consider the permutations of the signature starting with $1$. Then the evenness should lead to two resulting signatures $(1, 2, 2, 1, 2, 2, 2)$ and $(1, 2, 2, 2, 1, 2, 2)$. One could even call them the authentic and the plagal systems.

And for the pentatonic scales one should get a perfect remedy. The pentatonic scales would be the complementary scales to the heptatonic scales. Therefore, the maximal twelve tone groups are conjugates of $\Gamma$. It would be elegant to consider the conjugate group generated by $\{(11  \ 0), (2\  3), (4 \ 5), (7\ 8), (9 \ 10)\}$.  This makes $0$ movable, but leads to relevant musical scales. It is nice that this way the system includes not only the {\it yo} but also the  {\it in} pentatonic scales. (The Japanese music theorist Uehara proposed two basic pentatonic modes: {\it yo} (anhemitonic) and {\it in} (hemitonic).)
\end{quote}
Under the scheme of  pentatonic atonic scales proposed above, the largest orbit has $32$ elements, exactly half of which are tonic scales.

%%This paper develops the theory of the $t$-power mean orbit size of a finite group action and explores the merits of using this theory to study the ``musicality'' of musical scales.

{}

\end{document}